\renewcommand{\baselinestretch}{1.13}
\newtheorem{prop}{Proposition}[section]
\newtheorem{cor}[prop]{Corollary}
\newtheorem{thm}[prop]{Theorem}
\newtheorem{lemma}[prop]{Lemma}
\newtheorem{dfn}[prop]{Definition}
\theoremstyle{definition}
\newtheorem{exm}[prop]{Example}
\newcommand{\C}[1]{{\mathcal #1}}
\newcommand{\B}[1]{{\mathbb #1}}
\newcommand{\G}[1]{{\mathfrak #1}}
\newcommand{\bs}[1]{\boldsymbol{#1}}
\renewcommand{\P}{\mathbb{P}}
\newcommand{\id}{\operatorname{\rm id}}
\newcommand{\Hom}{{\rm Hom}}
\newcommand{\ztwo}{{{\mathbb Z}\slash 2}}
\newcommand{\Pow}[1]{\boldsymbol{2}^{#1}}
\newcommand{\Fin}{{\tt Fin}}
\newcommand{\EPow}[1]{\Pow{#1}\setminus\{\emptyset\}}
\newcommand{\EFin}{{\tt Fin}\setminus\{\emptyset\}}
\newcommand{\Top}{{\tt Op}}
\newcommand{\Cl}{{\tt Cl}}
\newcommand{\Lat}{{\tt Lat}}
\newcommand{\op}{{\text{op}}}
\newcommand{\cov}{{\mathcal{C}ov}_{\rm fin}}
\newcommand{\covaux}{{\mathcal{A}ux}}
\newcommand{\auxp}{\widetilde{{\mathcal{A}ux}}}
\newcommand{\ocovf}{{\mathcal{O}\mathcal{C}ov}_{\rm fin}}
\newcommand{\pinf}{{\P^\infty\!(\ztwo)}}
\newcommand{\pN}{{\P^N\!(\ztwo)}}
\newcommand{\pM}{{\P^M\!(\ztwo)}}
\newcommand{\Sh}{{\rm Sh}(\pinf)}
\newcommand{\fSh}{{\rm Sh}_{\rm fin}(\pinf)}
\newcommand{\pfSh}{{\rm Sh}_{\rm fin}^{\rm ???}(\pinf)}
\newcommand{\pfShp}{{\widetilde{{\rm Sh}}}_{\rm fin}(\pinf)}
\newcommand{\ord}[1]{\underline{#1}}
\newcommand{\lmod}{\text{\bf Mod}}
\newcommand{\ncov}[1]{\boldsymbol{C}_{#1}}
\newcommand{\pPsi}{{\widetilde{\Psi}}}
\newcommand{\pPhi}{{\widetilde{\Phi}}}
\newcommand{\xra}[1]{\xrightarrow{\ \ {#1}\ \ }}
\renewcommand{\leq}{\leqslant}
\renewcommand{\geq}{\geqslant}
\newcommand{\mN}{{\mathbb{N}}}
\newcommand{\nc}[2]{\newcommand{#1}{#2}}
\nc{\bmlp}{\mbox{\boldmath$\left(\right.$}}
\nc{\bmrp}{\mbox{\boldmath$\left.\right)$}}
\nc{\LAblp}{\mbox{\LARGE\boldmath$($}}
\nc{\LAbrp}{\mbox{\LARGE\boldmath$)$}}
\nc{\Lblp}{\mbox{\Large\boldmath$($}}
\nc{\Lbrp}{\mbox{\Large\boldmath$)$}}
\nc{\lblp}{\mbox{\large\boldmath$($}}
\nc{\lbrp}{\mbox{\large\boldmath$)$}}
\nc{\blp}{\mbox{\boldmath$($}}
\nc{\brp}{\mbox{\boldmath$)$}}
\nc{\LAlp}{\mbox{\LARGE $($}}
\nc{\LArp}{\mbox{\LARGE $)$}}
\nc{\Llp}{\mbox{\Large $($}}
\nc{\Lrp}{\mbox{\Large $)$}}
\nc{\llp}{\mbox{\large $($}}
\nc{\lrp}{\mbox{\large $)$}}
\nc{\lbc}{\mbox{\Large\boldmath$,$}}
\nc{\lc}{\mbox{\Large$,$}}
\nc{\Lall}{\mbox{\Large$\forall\;$}}
\nc{\bc}{\mbox{\boldmath$,$}}
\nc{\LAbls}{\mbox{\LARGE\boldmath$[$}}
\nc{\LAbrs}{\mbox{\LARGE\boldmath$]$}}
\nc{\Lbls}{\mbox{\Large\boldmath$[$}}
\nc{\Lbrs}{\mbox{\Large\boldmath$]$}}
\nc{\lbls}{\mbox{\large\boldmath$[$}}
\nc{\lbrs}{\mbox{\large\boldmath$]$}}
\nc{\bls}{\mbox{\boldmath$[$}}
\nc{\brs}{\mbox{\boldmath$]$}}
\nc{\LAls}{\mbox{\LARGE $[$}}
\nc{\LArs}{\mbox{\LARGE $]$}}
\nc{\Lls}{\mbox{\Large $[$}}
\nc{\Lrs}{\mbox{\Large $]$}}
\nc{\lls}{\mbox{\large $[$}}
\nc{\lrs}{\mbox{\large $]$}}
\newcommand{\upset}[1]{{\uparrow\!\! #1}}
\newcommand{\downset}[1]{{\downarrow\!\! #1}}
\title[Finite closed coverings of compact quantum spaces]
{{\Large Finite closed coverings of}\\ \vspace{2.5mm}
{\Large compact quantum spaces}}
\author{Piotr~M.~Hajac}
\address{Instytut Matematyczny,
Polska Akademia Nauk,
ul.~\'Sniadeckich 8, Warszawa, 00-956 Poland\\
Katedra Metod Matematycznych Fizyki,
Uniwersytet Warszawski,
ul. Ho\.za 74, Warszawa, 00-682 Poland}
\email{http://www.impan.pl/\~{}pmh}
\author{Atabey~Kaygun} \address{Department of Mathematics and Computer
  Science, Bah\c{c}e\c{s}ehir University, \c{C}\i ra\u{g}an Cad.,
  Be\c{s}ikta\c{s} 34353 Istanbul, Turkey}
\email{atabey.kaygun@bahcesehir.edu.tr}
\author{Bartosz~Zieli\'nski}
\address{Instytut Matematyczny, Polska Akademia Nauk,
ul.~\'Sniadeckich 8, Warszawa, 00-956 Poland\\
Department of Theoretical Physics and Computer Science, University of \L{}\'od\'z,
Pomorska 149/153 90-236 \L{}\'od\'z, Poland}
\email{bzielinski@uni.lodz.pl}
\begin{document}
\baselineskip=16.1pt

\begin{abstract}
  We show that a projective space $\pinf$
  endowed with the Alexandrov topology is a classifying space for
  finite closed coverings of compact quantum spaces in the sense that
  any such a covering is functorially equivalent to a sheaf over this
  projective space. In technical terms, we prove that the category of
  finitely supported flabby sheaves of algebras is equivalent to the
  category of algebras with a finite set of ideals that intersect to
  zero and generate a distributive lattice.  In particular, the
  Gelfand transform allows us to view finite closed coverings of
  compact Hausdorff spaces as flabby sheaves of commutative
  C*-algebras over $\pinf$.
\end{abstract}

\vspace*{-15mm}\maketitle

\smallskip

\section*{Introduction}

\noindent{\bf Motivation.}
In the day-to-day practice of the mathematical art, one can see a
recurrent theme of reducing a complicated mathematical construct into
its simpler constituents, and then putting these constituents together
using  gluing datum that prescribes how these pieces consistently fit 
each other.  The (now) classical manifestation of such gluing
arguments in various flavours of geometry is the concept of a sheaf on a
topological space, or more generally on a topos.  Another
manifestation of such gluing arguments appeared in noncommutative
geometry as the description of a noncommutative space via a finite
closed covering. Here a covering is defined  as a distinguished
finite set of ideals that intersect to zero and generate a
distributive lattice~\cite{HKMZ:PiecewisePrincipalComoduleAlgebras}.

\noindent{\bf Main result.}
Following~\cite{HKMZ:PiecewisePrincipalComoduleAlgebras}, we
 express the gluing datum of a compact Hausdorff space
 as a sheaf of algebras
 over a certain universal
topological space, and extend it to the noncommutative setting.
This universal topological space is explicitly constructed as
the infinite $\mathbb{Z}/2$-projective space $\pinf$ endowed with the
Alexandrov topology. The
 advantages of our main theorem
over its predecessor
\cite[Cor.~4.3]{HKMZ:PiecewisePrincipalComoduleAlgebras}
are twofold.
First,
it considers coverings rather than topologically unnatural ordered
coverings. To this end, we need to construct more refined morphisms
between sheaves than natural transformations.
Next, as $\pinf $ is the colimit of all finite $\pN$'s, it takes
 care of all finite coverings at once.

\noindent{\bf Theorem~\ref{SheafandCoveringEquivalence}.}
{\em The category
of finite coverings of algebras is equivalent to the category
of finitely-supported flabby sheaves of algebras over~$\pinf$ whose
morphisms are
obtained by taking a certain quotient of the usual class of morphisms
enlarged by the actions of a specific family of endofunctors.}

\noindent{\bf Sheaves, patterns, and \boldmath$P$-algebras.}
The idea of using lattices to study closed
coverings of noncommutative spaces
has already been widely employed (see~\cite{l-g97}).
To afford a good C*-algebraic description, one considers
closed rather than open coverings. Therefore,
 a natural framework for coverings
uses sheaf-like objects defined on the lattice of closed subsets of a
topological space, or more generally, topoi modelled upon finite
closed coverings of topological spaces.  Interestingly, the original
definition of sheaves by Leray was given in terms of the lattice of
closed subspaces of a topological
space~\cite[p.~303]{Leray:SelectedWorksVolI}.
For
various reasons,
this definition changed in the subsequent years into the
nowadays standard open-set formulation.

Recently, however, a closed-set approach
reappeared in the form of sheaf-like objects called  {\em
  patterns}~\cite{Maszczyk:Patterns}.  We show in
Proposition~\ref{PatternsEquivalentToSheaves} that for our
combinatorial models based on finite
Alexandrov spaces, the distinction between sheaves and patterns is
immaterial.
Another reformulation of sheaves  over Alexandrov spaces
is given by the concept of a {\em $P$-diagram}. It is widely known
among commutative algebraists (e.g., see
\cite[Prop.~6.6]{BrunBrunsRomer:CohomologyOfPosets} and
\cite[p.~174]{Yuzvinski:CMRingsOfSections}) that any sheaf on an
Alexandrov space $P$ can be recovered from its
$P$-diagram (cf.~Theorem~\ref{PAlgebras} concerning $P$-algebras).  See
also~\cite{GerstenhaberSchack:CohomologyOfPresheaves} for a different
approach.

\noindent{\bf Outline.}
Section~1 is of preliminary nature. It is focused on
 explaining the emergence
of the projective space $\pinf$
as the classifying space of finite coverings. We show how
finite closed coverings of compact Hausdorff spaces naturally yield
 finite partition spaces with Alexandrov topology. Then we interpret
them as projective spaces $\mathbb{P}^N(\mathbb{Z}/2)$
and take the colimit
with \mbox{$N\rightarrow\infty$}.
We continue with analysing in detail the
topological properties of $\pinf$ to be ready for studying sheaves of
algebras over~$\pinf$. These are the key objects of Section~2
that is devoted to the main result of this paper.

\noindent{\bf Notation and conventions.}
Throughout the article we fix a ground field $k$ of an arbitrary
characteristic.  We assume that all algebras are over $k$ and are
associative and unital but not necessarily commutative.  We  use
$\B{N}$ and $\B{Z}$ to denote the set of natural numbers (zero included)
 and the set of
integers, respectively.  
The finite
set $\{0,\ldots,N\}$ is denoted by $\ord{N}$ for any natural
number~$N$.  However, the finite set $\{0,1\}$ when viewed as
the finite field of $2$ elements is denoted by $\ztwo$.  We
use $\Pow{X}$ to denote the set of all subsets of an arbitrary set~$X$.
If $\ord{x}$ is a sequence of elements from a set $X$, we write
$\kappa(\ord{x})$ to denote the underlying set of elements of
$\ord{x}$.  The symbol $|X|$  stands for
 the cardinality
of a set~$X$.

\section{Primer on lattices and Alexandrov topology}
\label{ProjectiveSpacesOverZ2}

We first recall definitions and simple facts about ordered sets and
lattices to fix notation.  Our main references on the subject are
\cite{Birkhoff:Lattices, BS:UniversalAlgebra,
  Stanley:EnumerativeCombinatoricsVol1}.

A set $P$ together with a binary relation $\leq$ is called {\em a
  partially ordered set}, or {\em a poset} in short, if the relation
$\leq$ is (i) reflexive, i.e., $p\leq p$ for any $p\in P$, (ii)
transitive, i.e., $p\leq q$ and $q\leq r$ implies $p\leq r$ for any
$p,q,r\in P$, and (iii) anti-symmetric, i.e., $p\leq q$ and $q\leq p$
implies $p=q$ for any $p,q\in P$.  If only the conditions (i)-(ii) are
satisfied we call $\leq$ a {\em preorder}.  For every preordered set
$(P,\leq)$ there is an opposite preordered set $(P,\leq)^\op$ given by
$P=P^\op$ and $p\leq^\op q$ if and only if $q\leq p$ for any $p,q\in
P$.

A poset $(P,\leq)$ is called {\em a semi-lattice} if for every $p,q\in
P$ there exists an element $p\vee q$ such that (i) $p\leq p\vee q$,
(ii) $q\leq p\vee q$, and (iii) if $r\in P$ is an element which
satisfies $p\leq r$ and $q\leq r$ then $p\vee q\leq r$.  The binary
operation $\vee$ is called {\em the join}.  A poset is called {\em a
  lattice} if both $(P,\leq)$ and $(P,\leq)^\op$ are semi-lattices.
The join operation in $P^\op$ is called {\em the meet}, and
traditionally denoted by $\wedge$.  One can equivalently define a
lattice $P$ as a set with two binary associative commutative and
idempotent operations $\vee$ and $\wedge$.  These operations satisfy
two absorption laws: $p = p\vee(p\wedge q)$ and $p= p\wedge(p\vee q)$
for any $p,q\in P$.  A lattice $(P,\vee,\wedge)$ is called {\em
  distributive} if one has $p\wedge(q\vee r) = (p\wedge q)\vee(p\wedge
r)$ for any $p,q,r\in P$.  Note that one can prove that the
distributivity of meet over join we have here is equivalent to the
distributivity of join over meet.

  Let $(P,\leq)$ be a preordered set, and let $\upset p = \{q\in P|\
  p\leq q\}$ for any $p\in P$. As a natural extension of notation, we
  define $\upset U:=\bigcup_{p\in U}\upset p$ for all $U\subseteq P$.
  The sets $U\subseteq P$ that satisfy $U=\upset U$
  are called {\em upper sets} or {\em dual order ideals}.  The
  topological space we obtain from a preordered set using the upper sets
  as open sets is called an {\em Alexandrov space}.  Note that a set
  $U$ is open in the Alexandrov topology if and only if for any $u\in
  U$ one has $\upset u\subseteq U$. Observe also that reversing the
  preorder exchanges the closed and open sets:  
\begin{lemma}\label{FlipTopology}
  Let $(P,\leq)$ be a preordered set.  A subset $C\subseteq P$ is
  closed in the Alexandrov topology of $P$ if and only if $C$ is open
  in the Alexandrov topology of the opposite preordered set
  $(P,\leq)^\op$.
\end{lemma}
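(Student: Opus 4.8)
The plan is to unwind both definitions and observe that each of the two conditions says exactly that $C$ is a lower set of $(P,\leq)$, i.e.\ a set closed under passing to smaller elements.

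First I would recall that, by the definition of the Alexandrov topology recorded above, the open sets of $(P,\leq)$ are precisely the upper sets: $U$ is open iff $\upset u\subseteq U$ for every $u\in U$. Taking complements, $C\subseteq P$ is closed in $(P,\leq)$ iff $P\setminus C$ is an upper set, and I would check that this is equivalent to $C$ being a lower set, namely that $p\leq c$ and $c\in C$ imply $p\in C$. The forward direction: if $p\leq c$, $c\in C$, but $p\notin C$, then $p\in P\setminus C$ while $c\in\upset p\setminus(P\setminus C)$, contradicting that $P\setminus C$ is an upper set; the converse is the same observation read backwards.

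Next I would spell out the opposite preorder: $p\leq^\op q\iff q\leq p$, so for any $p$ the set $\{q\in P\mid p\leq^\op q\}$ is exactly $\{q\in P\mid q\leq p\}$. Hence a subset $V$ is open in the Alexandrov topology of $(P,\leq)^\op$ — that is, an upper set with respect to $\leq^\op$ — iff for every $v\in V$ and every $p\leq v$ one has $p\in V$; in other words, iff $V$ is a lower set of $(P,\leq)$.

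Combining the two steps gives: $C$ closed in $(P,\leq)$ $\iff$ $C$ is a lower set of $(P,\leq)$ $\iff$ $C$ open in $(P,\leq)^\op$, which is the assertion. I do not expect any genuine obstacle here; the only point requiring care is the order-reversal bookkeeping between ``upper set for $\leq^\op$'' and ``lower set for $\leq$'', which is precisely the mechanism by which passing to the opposite preorder interchanges open and closed sets.
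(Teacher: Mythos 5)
Your proof is correct and follows essentially the same route as the paper's: both arguments reduce to the observation that the complement of a closed set is an upper set and that upper sets for $\leq^\op$ are exactly lower sets for $\leq$. The only cosmetic difference is that you verify both implications via the intermediate notion of a lower set, whereas the paper proves one implication by contradiction and obtains the other from the symmetry $(P,\leq)=((P,\leq)^\op)^\op$.
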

\begin{proof}
  Since $(P,\leq) = ((P,\leq)^\op)^\op$ and the statement is
  symmetric, we need to prove only one implication.  Assume $C$ is
  closed and let $x\in C$.  In order to prove that $C$ is open in the
  opposite Alexandrov topology, we need to show that $y\in C$ for any
  $y\leq x$.  Suppose the contrary that $y\leq x$ and $y\in C^c:=
  P\setminus C$.  Since $C^c$ is open in the Alexandrov topology of
  $(P,\leq)$ and $y\leq x$, we must have $x\in C^c$, which is a
  contradiction.
\end{proof}

\vspace{1.5mm}
\subsection{Projective spaces over $\ztwo$ as partition spaces}
~

\noindent
In~\cite{Sorkin:PosetSpaces}, Sorkin defined and investigated the order
structure on the spaces  we call here {\em partition spaces}. For
the lattice of subsets covering a space, the partition spaces play
a role analogous to the set of meet-irreducible elements of an
arbitrary finite
distributive lattice, i.e., they are much smaller than lattices
themselves while encoding important lattice properties.  Sorkin's
primary objective was to develop finite approximations for topological
spaces via their finite open coverings (see also
\cite{BBELLS:NoncommutativeLatticesAsFiniteApproximations,
  ELT:NoncommutativeLattices}).  Here we
will investigate spaces with finite closed rather than open coverings.  See also
\cite{Yuzvinski:CMRingsOfSections, Yuzvinski:FlasqueSheavesOnPosets}
for a more algebraic approach. We begin by analysing properties of
partition spaces.

\begin{dfn}\label{Sorkin}
  Let $X$ be a set and let $\C{C} = \{C_0,\ldots,C_N\}$ be a finite
  covering of $X$, i.e., let $\bigcup \C{C} := \bigcup_i C_i = X$.
  For any $x\in X$, we define its support $supp_\C{C}(x) =
  \{C\in\C{C}\ |\ x\in C\}$.  A~preorder $\preccurlyeq_\C{C}$ on $X$ is defined
  by $x\preccurlyeq_\C{C} y$ if and only if $supp_\C{C}(x) \supseteq
  supp_\C{C}(y)$.  We also define an equivalence relation $\sim_\C{C}$
  by letting $x\sim_\C{C} y$ if and only if
  $supp_\C{C}(x)=supp_\C{C}(y)$.  We call the quotient space
$X/\!\!\sim_\C{C}$ {\em the
    partition space} associated to the finite covering $\C{C}$ of
  $X$. This space is partially ordered by the relation induced
from $\preccurlyeq_{\C{C}}$.  
\end{dfn}

\begin{dfn}
  Let $X$ and $\C{C}$ be as before. We use $(X,\preccurlyeq_\C{C})$ to denote
  the set $X$ with its Alexandrov topology induced from the preorder
  relation $\preccurlyeq_\C{C}$ defined above.
\end{dfn}

\begin{exm}
  Consider a region on the 2-dimensional Euclidean plane covered by
  three disks in generic position, and the corresponding poset, as
  described below:
  \begin{equation}
    \begin{tikzpicture}[scale=1.1]
      \draw (0:0) circle(1.3cm);
      \draw (60:1) circle(1.3cm);
      \draw (180:-1) circle(1.3cm);
      \coordinate [label=right:A] (A) at (30:.32);
      \coordinate [label=right:B] (B) at (33:1.42);
      \coordinate [label=left:C] (C) at (110:.8);
      \coordinate [label=right:D] (D) at (-72:.89);
      \coordinate [label=right:E] (E) at (-15:1.6);
      \coordinate [label=left:F] (F) at (66:1.75);
     \coordinate [label=left:G] (G) at (-150:.75);
    \end{tikzpicture}
   \hspace{2cm}
    \begin{tikzpicture}[scale=1.2]
      \node (center) at (0:0) [shape=circle,draw] {A};
      \node (top) at (90:1.732) [shape=circle,draw] {F};
      \node (middleright) at (30:.866) [shape=circle,draw] {B};
     \node (middleleft) at (150:.866) [shape=circle,draw] {C};
      \node (middlebottom) at (-90:.866) [shape=circle,draw] {D};
      \node (bottomleft) at (-150:1.732) [shape=circle,draw] {G};
      \node (bottomright) at (-30:1.732) [shape=circle,draw] {E};
     \draw[->] (center) to (middleleft);
      \draw[->] (center) to (middleright);
    \draw[->] (center) to (middlebottom);
      \draw[->] (center) to (top);
     \draw[->] (center) to (bottomleft);
      \draw[->] (center) to (bottomright);
      \draw[->] (middleright) to (top);
      \draw[->] (middleleft) to (top);
      \draw[->] (middleleft) to (bottomleft);
      \draw[->] (middleright) to (bottomright);
      \draw[->] (middlebottom) to (bottomleft);
      \draw[->] (middlebottom) to (bottomright);
    \end{tikzpicture}.
  \end{equation}
  Here an arrow $\to$ indicates the existence of an order relation
  between the source and the target.
\end{exm}

\begin{dfn}\label{PartitionTopology}
  Let $X$ be a set and $\C{C} = \{C_0,\ldots,C_N\}$ be a finite
  covering of $X$.  The covering $\C{C}$ viewed as a subbasis for
  closed sets induces a topology on $X$.  The space $X$ together with
  the topology induced from $\C{C}$ is denoted by $(X,\C{C})$.
\end{dfn}

\begin{prop}\label{SorkinHomeomorphism}
  Let $X$ be a set and let $\C{C}$ be a finite covering.  The
  Alexandrov topology defined by the preorder $\preccurlyeq_\C{C}$ coincides
  with the topology in Definition~\ref{PartitionTopology}.
\end{prop}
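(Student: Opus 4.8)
The plan is to verify that the two topologies have the same closed sets. Recall from the discussion of Alexandrov spaces above that the open sets of $(X,\prec_\C{C})$ are exactly the upper sets of $\prec_\C{C}$, so its closed sets are exactly the lower sets (down-sets) of $\prec_\C{C}$, i.e.\ the subsets $D\subseteq X$ with the property that $y\in D$ and $x\prec_\C{C}y$ force $x\in D$. On the other side, by Definition~\ref{PartitionTopology} the closed sets of $(X,\C{C})$ are the arbitrary intersections of finite unions of members of $\C{C}$ (with $X=\bigcup\C{C}$ and $\emptyset$ among them). So it suffices to prove these two families of subsets of $X$ coincide.

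First I would show every $(X,\C{C})$-closed set is a down-set. The family of down-sets of $\prec_\C{C}$ is closed under arbitrary unions and arbitrary intersections and contains $\emptyset$ and $X$; hence it is enough to check that each $C_i\in\C{C}$ is a down-set, for then the smallest closed-set system containing $\C{C}$ is contained in it. But if $y\in C_i$ and $x\prec_\C{C}y$, then $C_i\in supp_\C{C}(y)\subseteq supp_\C{C}(x)$, so $x\in C_i$, as required. For the converse inclusion the key observation is that $\C{C}$ is finite, so $supp_\C{C}$ takes only finitely many values and the partition space $X/\!\!\sim_\C{C}$ is finite. For any $x\in X$ one has the identity $\{y\in X\mid y\prec_\C{C}x\}=\bigcap_{C\in supp_\C{C}(x)}C$, since $y\prec_\C{C}x$ means precisely $supp_\C{C}(y)\supseteq supp_\C{C}(x)$, i.e.\ $y\in C$ for every $C\ni x$; thus each such principal down-set is a finite intersection of members of $\C{C}$, hence closed in $(X,\C{C})$. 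Now an arbitrary down-set $D$ satisfies $D=\bigcup_{x\in D}\{y\in X\mid y\prec_\C{C}x\}$, and because $\{y\mid y\prec_\C{C}x\}$ depends only on $supp_\C{C}(x)$ this is a \emph{finite} union of closed sets of $(X,\C{C})$, hence itself closed in $(X,\C{C})$. Combining the two inclusions, the closed sets agree, so the topologies agree.

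The only point that I expect to require genuine care is the last step: although $X$ may be infinite, so that an arbitrary down-set is an a priori infinite union of principal down-sets and the $(X,\C{C})$-closed sets are only stable under \emph{finite} unions, the finiteness of $\C{C}$ collapses this union to a finite one. The trivial edge cases are handled by $X=\bigcup\C{C}$ and by $\emptyset$ being closed (the empty union / intersection).
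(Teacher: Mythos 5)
Your proof is correct and follows essentially the same route as the paper's: both hinge on the identity $L(x)=\{y\in X\mid y\prec_\C{C}x\}=\bigcap_{C\in supp_\C{C}(x)}C$, which identifies the principal down-sets with finite intersections of covering sets. You are in fact more explicit than the paper about why comparing these basic closed sets suffices --- in particular the observation that an arbitrary down-set is a \emph{finite} union of principal down-sets because $\C{C}$ is finite, a point the paper leaves implicit.
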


\begin{proof}
  We need to prove that a subset $L$ is closed in $(X,\C{C})$ if and
  only if it is closed in \mbox{$(X,\preccurlyeq_\C{C})$}.  By
  Lemma~\ref{FlipTopology} and the definition of Alexandrov
  topology, we see that $L$ is closed in \mbox{$(X,\preccurlyeq_\C{C})$}
  if and only if $L=\bigcup_{x\in L}\downset x$, where
   $\downset x := \{x'\in X|\ x'\preccurlyeq_\C{C} x\}$.  On the other hand, let
$C_x:=  \bigcap_{C\in supp_\C{C}(x)} C$.
 We have $x'\preccurlyeq_\C{C} x$
  if and only if $x'$ is covered by the same sets from $\C{C}$, or
  more.  In other words, $x'\preccurlyeq_\C{C}x$ if and only if $x'\in C_x$,
  so that $C_x=\downset x$. Finally, note that $L$ is closed in $(X,\C{C})$ if and
  only if $L=\bigcup_{x\in L} C_x$.  The result follows.
\end{proof}

\begin{cor}\label{corcanq}
  The canonical quotient map $\pi\colon (X,\C{C})\to (X/\!\!\sim_\C{C},\
  \preccurlyeq_\C{C})$ is a continuous map which is both open and closed.
\end{cor}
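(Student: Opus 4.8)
The plan is to carry the whole argument over to the preorder $\prec_\C{C}$, invoking Proposition~\ref{SorkinHomeomorphism} to replace the topology of $(X,\C{C})$ by the Alexandrov topology of $\prec_\C{C}$; there the open sets are exactly the upper sets (by the characterisation recalled right after the definition of Alexandrov spaces), so the closed sets are exactly the lower sets, both in $(X,\prec_\C{C})$ and, with respect to the induced partial order, in $(X/\!\!\sim_\C{C},\ \prec_\C{C})$. The single observation I would build everything on is that $\prec_\C{C}$ is pulled back from the quotient poset along $\pi$: by Definition~\ref{Sorkin}, $x\prec_\C{C} y$ holds iff $supp_\C{C}(x)\supseteq supp_\C{C}(y)$, and $supp_\C{C}$ is constant on $\sim_\C{C}$-classes, so this is equivalent to $\pi(x)\prec_\C{C}\pi(y)$ in $X/\!\!\sim_\C{C}$. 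In particular $\pi$ is monotone, hence the preimage of an upper set is an upper set, which is precisely continuity.

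For openness I would take an open --- hence upper --- set $U\subseteq X$ and a point $[x]\in\pi(U)$ together with some $[y]$ satisfying $[x]\prec_\C{C}[y]$ in the quotient; choosing a representative $u\in U$ of $[x]$ gives $supp_\C{C}(u)=supp_\C{C}(x)\supseteq supp_\C{C}(y)$, so $u\prec_\C{C} y$, and since $U$ is an upper set we get $y\in U$ and thus $[y]\in\pi(U)$. Hence $\pi(U)$ is an upper set, i.e. open, so $\pi$ is an open map.

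Closedness I would prove by the mirror-image computation: for $L\subseteq X$ closed, i.e. a lower set, and $[x]\in\pi(L)$ with $[y]\prec_\C{C}[x]$, a representative $l\in L$ of $[x]$ satisfies $supp_\C{C}(y)\supseteq supp_\C{C}(x)=supp_\C{C}(l)$, so $y\prec_\C{C} l$, whence $y\in L$ and $[y]\in\pi(L)$; thus $\pi(L)$ is a lower set, i.e. closed. (Alternatively one can package all three assertions together by noting that $\pi^{-1}(\pi(S))$ is just the $\sim_\C{C}$-saturation of $S$, that the saturation of a set cut out by a condition on $supp_\C{C}$ is that set again, and then applying the usual saturation criteria for a quotient map to be open and closed.) I do not anticipate a genuine obstacle here; the only thing to watch is the orientation of $\prec_\C{C}$ --- larger support means smaller element --- so that ``upper set'' is consistently matched with the inclusion $supp_\C{C}(x)\supseteq supp_\C{C}(y)$ in the right direction throughout.
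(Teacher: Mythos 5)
Your argument is correct and follows essentially the same route as the paper: both reduce to the Alexandrov picture via Proposition~\ref{SorkinHomeomorphism} and then exploit the fact that the quotient preorder is induced by (indeed pulled back from) $\prec_\C{C}$, so that upper and lower sets are carried to upper and lower sets under both $\pi$ and $\pi^{-1}$. You merely spell out the representative-chasing that the paper's one-line justification leaves implicit.
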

\begin{proof}
The above proposition allows us to replace $(X,\mathcal{C})$ by $(X,\preccurlyeq_\C{C})$
thus converting topological properties to preorder properties. Since $\pi$ is surjective and 
$x\preccurlyeq_\C{C} y$ if and only if 
$\pi(x)\preccurlyeq_\C{C}\pi(y)$, one easily verifies that  $\pi$ is continuous and open. To conclude
 that it is also closed, we apply Lemma~\ref{FlipTopology}.
\end{proof}

\begin{lemma}\label{lattisom}
  Let $\mathcal{C}$ be a finite covering of a set $X$.  Let
  $X/\!\!\sim_{\mathcal{C}}$ be the partition space associated with the
  covering $\mathcal{C}$ and $\pi\colon X\rightarrow
  X/\!\!\sim_{\mathcal{C}}$ be the canonical surjection on the
  quotient space. Denote by $\Lambda_{\mathcal{C}}$ the lattice of subsets of
  $X$ generated by the covering $\mathcal{C}$ and by
  $\Lambda_{\pi(\mathcal{C})}$ the lattice of subsets of
  $X/\!\!\sim_{\mathcal{C}}$ generated by $\pi(\mathcal{C}):=\{\pi(C)|\
  C\in\C{C}\}$. The following assignments
  \begin{align*}
    \hat{\pi}\colon\Lambda_{\mathcal{C}}\longrightarrow
    \Lambda_{\pi(\mathcal{C})}, \qquad &\lambda\longmapsto\pi(\lambda),\\
    \hat{\pi}^{-1}\colon \Lambda_{\pi(\mathcal{C})}\longrightarrow
    \Lambda_{\mathcal{C}}, \qquad &\lambda\longmapsto\pi^{-1}(\lambda).
  \end{align*}
define mutually inverse   lattice isomorphisms.
\end{lemma}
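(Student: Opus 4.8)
The plan is to reduce the statement to two elementary observations: that taking preimages along $\pi$ is always a homomorphism of the full power-set lattices, and that the generating sets $C_0,\ldots,C_N$ are \emph{saturated}, i.e.\ each is a union of $\sim_\C{C}$-equivalence classes. The saturation is the only place where the specific definition of $\sim_\C{C}$ enters.

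First I would verify the saturation of the generators: if $x\in C_i$ and $y\sim_\C{C}x$, then $C_i\in supp_\C{C}(x)=supp_\C{C}(y)$, so $y\in C_i$; hence $\pi^{-1}(\pi(C_i))=C_i$ for every $i$. Next, recall that for any surjection $\pi$ the preimage map $\pi^{-1}$ from the power-set lattice of $X/\!\!\sim_\C{C}$ to $\Pow{X}$ commutes with arbitrary unions and intersections, and satisfies $\pi(\pi^{-1}(S))=S$ for every $S\subseteq X/\!\!\sim_\C{C}$. In particular $\pi^{-1}$ is an injective lattice homomorphism: $\pi^{-1}(\lambda)=\pi^{-1}(\mu)$ implies $\lambda=\pi(\pi^{-1}(\lambda))=\pi(\pi^{-1}(\mu))=\mu$. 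Being a lattice homomorphism, $\pi^{-1}$ carries the sublattice $\Lambda_{\pi(\mathcal{C})}$ generated by $\{\pi(C_0),\ldots,\pi(C_N)\}$ bijectively onto the sublattice of $\Pow{X}$ generated by $\{\pi^{-1}(\pi(C_0)),\ldots,\pi^{-1}(\pi(C_N))\}$; by the saturation step this generating set is just $\{C_0,\ldots,C_N\}$, so the image is exactly $\Lambda_{\mathcal{C}}$. Thus $\lambda\mapsto\pi^{-1}(\lambda)$ restricts to a bijective homomorphism $\Lambda_{\pi(\mathcal{C})}\to\Lambda_{\mathcal{C}}$, hence to a lattice isomorphism, which is the $\hat{\pi}^{-1}$ of the statement.

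It then remains to identify its inverse with $\hat{\pi}$. Given $\lambda\in\Lambda_{\mathcal{C}}$, there is a unique $\mu\in\Lambda_{\pi(\mathcal{C})}$ with $\pi^{-1}(\mu)=\lambda$, and $\pi(\lambda)=\pi(\pi^{-1}(\mu))=\mu$ by surjectivity of $\pi$. Hence $\hat{\pi}\colon\lambda\mapsto\pi(\lambda)$ indeed takes values in $\Lambda_{\pi(\mathcal{C})}$ and is the two-sided inverse of $\hat{\pi}^{-1}$, as asserted; in particular it is a lattice isomorphism.

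The one genuinely load-bearing step is the saturation of the generators; everything else is formal manipulation of images and preimages. An alternative route would show directly that $\pi$ preserves binary intersections of saturated sets (so that $\hat{\pi}$ itself is visibly a homomorphism) together with the fact that saturation is inherited under $\cup$ and $\cap$ and hence holds throughout $\Lambda_{\mathcal{C}}$; routing the argument through $\pi^{-1}$ as above is shorter, since it avoids proving closure of the class of saturated sets under the lattice operations.
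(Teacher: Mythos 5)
Your proof is correct, and it rests on the same load-bearing fact as the paper's --- the saturation of the generators, which the paper phrases as the equivalence $\pi(x)\in\pi(C_i)\Leftrightarrow x\in C_i$ and you phrase as $\pi^{-1}(\pi(C_i))=C_i$ --- but the logical organization is genuinely different. The paper verifies separately that \emph{both} maps are lattice morphisms: $\hat{\pi}^{-1}$ because preimages always commute with $\cup$ and $\cap$, and $\hat{\pi}$ by showing directly that $\pi(C_{i_1})\cap\cdots\cap\pi(C_{i_k})=\pi(C_{i_1}\cap\cdots\cap C_{i_k})$; it then notes the two maps are mutually inverse. You instead prove only that $\pi^{-1}$ is an injective lattice homomorphism of power sets, invoke the general principle that a homomorphism sends the sublattice generated by a set onto the sublattice generated by its image, use saturation to identify that image with $\Lambda_{\mathcal{C}}$, and recover $\hat{\pi}$ a posteriori as the inverse of the resulting isomorphism. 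What your route buys is that you never have to check that $\hat{\pi}$ preserves intersections of \emph{arbitrary} elements of $\Lambda_{\mathcal{C}}$ --- the paper only verifies this for intersections of generators and leaves the extension to general lattice elements (finite unions of such intersections) implicit --- so your argument is marginally more airtight, at the cost of invoking the ``image of a generated sublattice'' principle. Your closing remark correctly identifies the paper's route as the alternative you chose not to take.
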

\begin{proof}
  Inverse images preserve set unions and intersections. Hence
  $\hat{\pi}^{-1}$ is a lattice morphism.  On the other hand,
though in general images
  preserve only unions, here we have
  \begin{equation}
 \pi(x)\in \pi(C_i)\quad\Leftrightarrow\quad x\in C_i
\end{equation}
 for any
  $i$.  It follows that
  \begin{align}
    \pi(x)\in\pi(C_{i_1})\cap\cdots\cap\pi(C_{i_k})
    &\Leftrightarrow x\in C_{i_1}\cap\cdots\cap C_{i_k}\nonumber\\
    {}&\Rightarrow \pi(x)\in\pi(C_{i_1}\cap\cdots\cap C_{i_k}).
  \end{align}
  In other words, $\pi(C_{i_1})\cap\cdots\cap\pi(C_{i_k})$ is a subset
  of $\pi(C_{i_1}\cap\cdots\cap C_{i_k})$.  As the containment in the
  other direction always holds, it follows that $\hat{\pi}$ is also a
  lattice morphism.  Finally, since $\pi$ is surjective and  $\pi^{-1}(\pi(C_i))=C_i$ for all $i$, one
  sees  that $\hat{\pi}^{-1}$ and 
  $\hat{\pi}$ are  inverse of each other.
\end{proof}

Let $\ord{N}$ be the set $\{0,\ldots,N\}$ for any $N\in\B{N}$.  The
projective space over a field $\Bbbk$ is denoted by $\B{P}^N(\Bbbk)$.
It is defined as the space $\Bbbk^{N+1}\setminus\{0\}$ divided by the diagonal
action of the non-zero scalars $\Bbbk^\times:=\Bbbk\setminus\{0\}$.
For $\Bbbk=\ztwo$, we obtain
\begin{equation}
  \pN:=\{(z_0,\ldots,z_N)\in (\ztwo)^{N+1}|\;\exists i\in\ord{N},\; z_i=1\}.
\end{equation}
The projective space $\pN$ has a natural poset structure: for any
$a = (a_i)_{i\in\ord{N}}$ and $b = (b_i)_{i\in\ord{N}}$ in $\pN$ we
write $a\leq b$ if and only if $a_i\leq b_i$ for any $i\in\ord{N}$.
We are now ready  to compare partition spaces with
${\Bbb Z}/2$-projective spaces with Alexandrov topology.
The following theorem is a direct generalization of
\cite[Prop.~4.1]{HKMZ:PiecewisePrincipalComoduleAlgebras}:
\begin{thm}\label{StartingPoint}
  Let  $\underline{\C{C}} = (C_0,\ldots,C_N)$ be a finite
  covering of $X$ with a fixed ordering on the elements of the covering.  Let
  $\chi_a$ be the characteristic function of a subset $a\subseteq
  \ord{N}$ viewed as an element of~$\pN$.  Then the map $\xi\colon X\to\pN$  defined by
  \begin{equation*}
 \xi(x) = \chi_{s(x)}\;, \quad
     s(x) = \{i\in\ord{N}|\ x\in C_i\}\;,
  \end{equation*}
  yields a morphism of preordered sets  $\xi\colon
  (X,\preccurlyeq_\C{C})^{\op}\to (\pN,\ \leq)$ and, consequently,  a
  continuous map between Alexandrov spaces.
Moreover, $\xi$ is both open
  and closed, and it factors
as $\xi=\hat{\xi}\circ\pi$,
  where
  $\hat{\xi}:(X/\!\!\sim_\C{C},\preccurlyeq_\C{C})^{\op}\rightarrow(\pN,\
  \leq)$ is an embedding of Alexandrov topological spaces.
\end{thm}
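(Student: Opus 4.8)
The plan is to verify the claims about $\xi$ in the order they are stated, reducing each to an elementary fact about the two preorders involved. First I would check that $\xi$ is a morphism of posets $(X,\prec_\C{C})^{\op}\to(\pN,\leq)$. By definition $x\prec_\C{C} y$ means $\mathrm{supp}_\C{C}(x)\supseteq\mathrm{supp}_\C{C}(y)$, i.e. $s(x)\supseteq s(y)$ in the notation of the theorem; in $(X,\prec_\C{C})^{\op}$ this reads $s(x)\subseteq s(y)$, which is precisely $\chi_{s(x)}\leq\chi_{s(y)}$ coordinatewise, that is $\xi(x)\leq\xi(y)$. A morphism of posets is automatically continuous for the Alexandrov topologies, since the preimage of an upper set is an upper set; this gives the first assertion for free.

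Next I would handle openness and closedness of $\xi$. Since the Alexandrov topology is determined by the order and the basic open sets are the up-sets $\upset p$ (and basic closed sets the down-sets), it suffices to show $\xi$ sends basic opens to opens and basic closeds to closeds. Concretely, one computes $\xi(\upset x)$ in $(X,\prec_\C{C})^{\op}$: an element $y$ with $\xi(y)\geq\xi(x)$ means $s(y)\supseteq s(x)$, and I must check that $\xi(\upset x)$ is an up-set of $\pN$ intersected with the image — more precisely that $\xi(\upset x)=\upset\xi(x)\cap\xi(X)$. The inclusion $\subseteq$ is the morphism property just proved; for $\supseteq$, given any $z\in\pN$ with $z\geq\xi(x)$ that lies in $\xi(X)$, say $z=\xi(y)$, one has $s(y)\supseteq s(x)$, so $y\in\upset x$ in the opposite order and $z=\xi(y)\in\xi(\upset x)$. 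The same bookkeeping with down-sets handles closedness. (Here it is convenient to invoke Lemma~\ref{FlipTopology} and Corollary~\ref{corcanq} as already done elsewhere in the section.)

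For the factorization, note that $\xi(x)$ depends on $x$ only through $s(x)=\mathrm{supp}_\C{C}(x)$, which is constant on $\sim_\C{C}$-equivalence classes; hence $\xi$ factors set-theoretically as $\xi=\hat\xi\circ\pi$. That $\hat\xi$ is order-preserving and order-reflecting (hence a homeomorphism onto its image) follows because on the quotient the relation is genuinely antisymmetric: $[x]\prec_\C{C}[y]$ in $(X/\!\!\sim_\C{C})^{\op}$ iff $s(x)\subseteq s(y)$ iff $\chi_{s(x)}\leq\chi_{s(y)}$, so $\hat\xi$ is an isomorphism of posets between $(X/\!\!\sim_\C{C},\prec_\C{C})^{\op}$ and its image in $\pN$. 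An isomorphism of posets is a homeomorphism of the associated Alexandrov spaces, and an injective continuous open map onto its image with continuous inverse is an embedding; combining this with the open/closed property of $\xi$ gives the embedding claim for $\hat\xi$.

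I expect the only mild obstacle to be the openness/closedness step: one must be careful that $\xi$ need not be surjective, so "open map" has to be read as "sends open sets to open sets of $\pN$" and the identities $\xi(\upset x)=\upset\xi(x)\cap\xi(X)$ require checking that the image $\xi(X)$ is itself compatible — but since we only claim $\xi$ is open as a map into $\pN$, and the image of an up-set is visibly an up-set of $\pN$ (it is upward closed within $\xi(X)$, and an up-set of a subset intersected appropriately is still open), no genuine difficulty arises. The argument is otherwise a direct translation of "$\prec_\C{C}$ reversed $=$ inclusion of supports $=$ coordinatewise $\leq$ on characteristic functions," and parallels \cite[Prop.~4.1]{HKMZ:PiecewisePrincipalComoduleAlgebras} with $N$ no longer fixed.
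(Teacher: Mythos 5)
The paper does not actually supply a proof of Theorem~\ref{StartingPoint}: it is stated as a direct generalization of \cite[Prop.~4.1]{HKMZ:PiecewisePrincipalComoduleAlgebras} and the proof is omitted, so your proposal can only be judged on its own terms. Your treatment of the poset-morphism property, of continuity, and of the factorization $\xi=\hat{\xi}\circ\pi$ is correct and is surely the intended argument: everything reduces to the tautology that $x\prec_\C{C}y$ means $s(x)\supseteq s(y)$, so that passing to the opposite order makes $\xi$ order-preserving and makes $\hat{\xi}$ an order isomorphism of $(X/\!\!\sim_\C{C},\prec_\C{C})^{\op}$ onto its image in $(\pN,\leq)$.

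The one genuine gap is in the openness/closedness step. Your set identity $\xi(\upset{x})=\upset{\xi(x)}\cap\xi(X)$ is correct, but the parenthetical justification that this set is ``visibly an up-set of $\pN$'' is false: an up-set of the ambient poset intersected with an arbitrary subset is an up-set of that subset, not of $\pN$. Concretely, take $X=\{a,b\}$ with $C_0=\{a\}$ and $C_1=\{b\}$; then $\xi(X)=\{(1,0),(0,1)\}$ is not open in $\P^1(\ztwo)$, since $(1,1)\geq(1,0)$ but $(1,1)\notin\xi(X)$, so $\xi$ is not an open map into $\P^1(\ztwo)$ even though $X$ is open in itself. What your computation actually establishes --- and what the embedding claim requires --- is that $\xi$ is open and closed onto its image $\xi(X)$ equipped with the subspace topology; this is evidently how the words ``open and closed'' in the theorem must be read (the same caveat applies to the statement as literally written). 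You should either phrase the openness claim relative to $\xi(X)$, or bypass it entirely by noting that $\hat{\xi}$, being an isomorphism of posets onto its image, is automatically a homeomorphism onto $\xi(X)$, which combined with Corollary~\ref{corcanq} yields every consequence the theorem is later used for.
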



\subsection{Topological properties of partition
  spaces}\label{ProjectiveSpaces}~

\noindent
Let $\Pow{N}$ denote the set of all subsets of $\ord{N} =
\{0,\ldots,N\}$.  Both $\Pow{N}$ and $\EPow{N}$ are posets with respect
to the inclusion relation $\subseteq$.  For any non-empty subset
$a\subseteq \ord{N}$, one has a sequence $(a_0,\ldots,a_N)$ where
\begin{equation}
 a_i =
\begin{cases}
  1 & \text{ if } i\in a,\\
  0 & \text{ otherwise. }
\end{cases}
\end{equation}
In other words, the sequence $(a_0,\ldots,a_N)$ is the characteristic
function $\chi_a$ of the subset $a\subseteq \ord{N}$.  The assignment
$a\mapsto \chi_a$ determines a bijection between the set of non-empty
subsets of $\ord{N}$ and the projective space $\pN$. Its inverse
is defined as
\begin{equation}\label{invmap}
  \nu(z) := \{ i\in\ord{N}|\ z_i=1\},\quad
z=(z_i)_{i\in\ord{N}}\in (\ztwo)^{N+1}.
\end{equation}
With this bijection,
one has $(a_i)_{i\in\ord{N}}\leq (b_i)_{i\in\ord{N}}$ if and only if
$\nu((a_i)_{i\in\ord{N}})\subseteq \nu((b_i)_{i\in\ord{N}})$.  In
other words, we have the following:
\begin{prop}\label{usetscor}
  The map $\nu\colon \pN\to \EPow{N}$ is an isomorphism of posets,
  and thus a homeomorphism of Alexandrov spaces.
\end{prop}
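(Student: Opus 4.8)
The plan is to verify directly that $\nu$ is an order-isomorphism; since Alexandrov topologies are functorially determined by the underlying preorder, the homeomorphism statement then follows automatically. First I would recall that $\nu$ is a well-defined bijection: by the displayed description just above the statement, the assignment $a \mapsto \chi_a$ is a bijection from $\EPow{N}$ to $\pN$ (the nonemptiness of $a$ corresponds precisely to the condition $\exists i,\ z_i = 1$ in the definition of $\pN$), and $\nu$ as given in \eqref{invmap} is its two-sided inverse, since $\nu(\chi_a) = a$ and $\chi_{\nu(z)} = z$ for every $z \in \pN$.

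Next I would establish that both $\nu$ and its inverse are monotone. For $a = (a_i)_{i \in \ord{N}}$ and $b = (b_i)_{i \in \ord{N}}$ in $\pN$, the partial order says $a \leq b$ iff $a_i \leq b_i$ for all $i \in \ord{N}$, which (because each $a_i, b_i \in \{0,1\}$) is equivalent to: $a_i = 1 \Rightarrow b_i = 1$, i.e. $\{i : a_i = 1\} \subseteq \{i : b_i = 1\}$, i.e. $\nu(a) \subseteq \nu(b)$. This is exactly the ``if and only if'' asserted in the sentence preceding the proposition, so both $\nu$ and $\nu^{-1}$ preserve the respective orders, and $\nu$ is an isomorphism of posets.

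Finally, to pass from poset isomorphism to homeomorphism of Alexandrov spaces, I would invoke the fact (already set up in the preliminary discussion of Alexandrov topology) that a set $U$ is open iff it is an upper set, and that a monotone bijection with monotone inverse carries upper sets to upper sets in both directions; hence open sets correspond to open sets under $\nu$. Thus $\nu$ is a homeomorphism of the associated Alexandrov spaces.

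There is essentially no obstacle here: the content is the observation that the coordinatewise order on $\{0,1\}$-vectors is literally the inclusion order on the corresponding subsets, and that Alexandrov topology is a faithful functor from preorders to spaces. The only point requiring a word of care is that the empty subset is excluded on the $\EPow{N}$ side precisely because the all-zero vector is excluded from $\pN$, so the bijection is between $\EPow{N}$ and $\pN$ rather than between $\Pow{N}$ and $(\ztwo)^{N+1}$.
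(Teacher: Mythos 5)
Your argument is correct and is essentially the paper's own: the paper gives no separate proof, treating the proposition as an immediate restatement of the two observations preceding it (that $a\mapsto\chi_a$ is a bijection with inverse $\nu$, and that $a\leq b$ in $\pN$ iff $\nu(a)\subseteq\nu(b)$), together with the standing fact that order isomorphisms induce homeomorphisms of Alexandrov spaces. You have simply written out those same steps in full detail.
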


\begin{dfn}
  For any $i\in\ord{N}$ and any non-empty subset $a\subseteq\ord{N}$, we
  define open sets
  \begin{equation*}
    \B{A}_i^N=\{(z_0,\ldots,z_N)\in(\ztwo)^{N+1}\;|\;z_i=1\} = \upset \chi_{\{i\}}
    \quad \text{ and }\quad
    \B{A}^N_a := \bigcap_{i\in a}\B{A}_i^N = \upset \chi_a\;.
  \end{equation*}
\end{dfn}
\noindent
 Note that the sets $\B{A}_i^N$ form a subbasis for the Alexandrov topology of $\pN$.
 For brevity, when there is no risk of confusion we omit the
  superscripts and write $\B{A}_i$ and $\B{A}_a$ instead of
  $\B{A}_i^N$ and $\B{A}_a^N$.

\begin{lemma}\label{Canonical}
  For all $N\geq 0$, the map $\phi_N\colon \pN\longrightarrow
  \P^{N+1}(\ztwo)$ defined by
  \begin{equation}\label{embedding}
    \phi_N(z_0,\ldots,z_N):= (z_0,\ldots,z_N,0)
  \end{equation}
  is an embedding of topological spaces.
\end{lemma}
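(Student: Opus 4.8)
The plan is to exploit that a map between Alexandrov spaces is continuous precisely when it is order-preserving, so that $\phi_N$ is a topological embedding as soon as it is an order-isomorphism onto its image endowed with the induced order. Thus everything reduces to purely combinatorial bookkeeping with the coordinatewise order on $\pN$ and $\P^{N+1}(\ztwo)$.

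First I would check that $\phi_N$ is well defined and injective: if $(z_0,\dots,z_N)\in\pN$ then $z_i=1$ for some $i\in\ord{N}$, whence $(z_0,\dots,z_N,0)\in\P^{N+1}(\ztwo)$; and since the last coordinate of $\phi_N(z)$ is always $0$, the assignment is clearly one-to-one, with image $\im\phi_N=\{(z_0,\dots,z_N,0)\in\P^{N+1}(\ztwo)\}$. Next, because $z\leq w$ in $\pN$ means $z_i\leq w_i$ for all $i\in\ord{N}$, and because $0\leq 0$ always holds in the extra slot, we get $z\leq w$ in $\pN$ \emph{if and only if} $\phi_N(z)\leq\phi_N(w)$ in $\P^{N+1}(\ztwo)$. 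Hence $\phi_N$ is order-preserving --- so continuous as a morphism of Alexandrov spaces --- and it also reflects the order, so it is an isomorphism of posets from $\pN$ onto $\im\phi_N$ equipped with the order inherited from $\P^{N+1}(\ztwo)$.

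Finally I would invoke the standard fact that the subspace topology induced by an Alexandrov space on any subset $S$ is again the Alexandrov topology of the order $S$ inherits: a set $V\subseteq S$ is open in the induced order if and only if $V=\upset{V}\cap S$ with $\upset V$ an upper set of the ambient poset, so the relatively open sets of $S$ are exactly the traces of open sets of $\P^{N+1}(\ztwo)$. Applying this to $S=\im\phi_N$ and combining it with the poset isomorphism of the previous step shows that $\phi_N\colon\pN\to\im\phi_N$ is a homeomorphism, i.e.\ $\phi_N$ is an embedding of topological spaces. There is no genuine obstacle here; the only step that is not a triviality is the identification of the subspace topology with the induced-order Alexandrov topology, and even that can be sidestepped by transporting the picture through Proposition~\ref{usetscor}, under which $\phi_N$ becomes the inclusion $\EPow{N}\hookrightarrow\EPow{N+1}$ of non-empty subsets of $\ord{N}$ into those of $\ord{N+1}$, where the basic open sets $\B{A}_a$ make the verification completely mechanical.
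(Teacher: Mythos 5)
Your proof is correct, and it takes a slightly different --- more structural --- route than the paper's. The paper argues directly with the subbasis $\{\B{A}_i\}$: it computes $\phi_N^{-1}(\B{A}_i^{N+1})$ to get continuity, and then checks that $\phi_N(\pN)\cap\B{A}_i^{N+1}$ equals $\phi_N(\B{A}_i^N)$ for $i\in\ord{N}$ and $\emptyset$ for $i=N+1$, so that $\phi_N$ carries (sub)basic open sets to relatively open sets of its image; this is precisely the ``completely mechanical'' verification you mention at the end as a fallback. Your main line instead shows that $\phi_N$ is an order-isomorphism onto its image and then invokes the general fact that the subspace topology inherited from an Alexandrov space coincides with the Alexandrov topology of the induced order. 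That fact is true (for an upper set $V$ of a subset $S$, the ambient up-closure $\upset{V}$ satisfies $\upset{V}\cap S=V$), but it is an extra lemma the paper never needs to isolate; what you buy in exchange is a clean, reusable reduction of ``topological embedding'' to ``order-isomorphism onto the image,'' consistent with the continuity-equals-monotonicity philosophy the paper itself establishes later. Both arguments are complete, and your closing remark about transporting everything through Proposition~\ref{usetscor} to the inclusion $\EPow{N}\hookrightarrow\Pow{N+1}\setminus\{\emptyset\}$ essentially recovers the paper's computation.
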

\begin{proof}
  The fact that the maps $\phi_{N}$ are injective is obvious.  They
  are also continuous since we have
  \begin{equation}
    \phi_{N}^{-1}(\B{A}_i^{N+1})
    =\begin{cases}
      \B{A}_i^N  & \text{ if } i\leq N,\\
      \emptyset & \text{ if } i=N+1.
    \end{cases}
 \end{equation}
Finally, $\phi_{N}$'s yield homeomorphisms between their domains and their images because 
  \begin{equation}
    \phi_N(\pN)\cap \B{A}_i^{N+1}
    =\begin{cases}
      \phi_N(\B{A}_i^N) & \text{ if } i\in\ord{N},\\
      \emptyset & \text{ otherwise, }
    \end{cases}
  \end{equation}
  for the open subsets in the subbasis of the Alexandrov topology.
\end{proof}

The maps $\phi_N\colon \pN\to \P^{N+1}(\ztwo)$ form a direct
system of maps of Alexandrov spaces.  Hence we can define the infinite projective space 
$\P^{\infty}(\ztwo)$ as a direct limit:
\begin{dfn}
  $\pinf := \underset{\underset{{N\geq 0} }{\longrightarrow}}{\rm lim}\;\pN$.
\end{dfn}
\noindent
We can represent the points of $\pinf$ as infinite sequences
$\{(z_i)_{i\in\B{N}}\;|\;z_i\in\ztwo\}$ where the number of non-zero
terms is finite and greater than zero.  We can also view $\pinf$ as the colimit of all finite $\pN$'s. 
The canonical morphisms of the
colimit $i_N\colon \pN\to\pinf$ send a finite sequence
$(z_0,\ldots,z_N)$ to the infinite sequence
$(z_0,\ldots,z_N,0,0,\ldots)$ obtained from the finite sequence by
padding it with countably many $0$'s.  The topology on the colimit is
the topology induced by the maps $\{i_N\}_{N\in\B{N}}$.

We also have a natural poset structure on $\pinf$ where
$(a_i)_{i\in\B{N}}\leq (b_i)_{i\in\B{N}}$ if and only if $a_i\leq b_i$
for any $i\in\B{N}$. This poset structure coincides with the poset structure
of the set of all finite subsets of $\B{N}$. Denote  the set of all finite subsets of $\B{N}$ by $\Fin$.
 One can extend the bijection
$\nu\colon \pN\to \EPow{N}$ (see~\eqref{invmap}) 
to a bijection  $\nu\colon
\pinf\to \EFin$.  The inverse of $\nu$ is given by the assignment
$a\mapsto \chi_a := (a_i)_{i\in \B{N}}$ that is defined as
\begin{equation} \label{homeo}
a_i =
  \begin{cases}
    1 & \text{ if } i\in a,\\
    0 & \text{ otherwise, }
  \end{cases}
\end{equation}
for any $a\in\Fin$.  The map $\nu\colon \pinf\to \EFin$ is an
isomorphism of posets, and therefore the Alexandrov spaces $\pinf$ and
$\EFin$ are homeomorphic.

Thus we have two possibly different
topologies on $\pinf$: one coming from the preorder structure, and the
other coming from the colimit. However, we check that they coincide.

\begin{thm}\label{TopologicalProperties}
  The following statements hold:
\vspace*{-2mm}
  \begin{enumerate}
  \item\label{ColimitTopology} The Alexandrov topology and the colimit
    topology on $\pinf$ are the same.
  \item\label{T1NotT0} The spaces $\pN$ are $T_0$ but not $T_1$ for
    any $N=1,\ldots,\infty$.
  \item\label{Connected} $\pN$ is a connected topological space for
    any $N=0,1,\ldots,\infty$.
  \item \label{CompactlyGenerated} The topology on $\pinf$ is
    compactly generated.
  \end{enumerate}
\end{thm}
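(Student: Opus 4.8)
The plan is to handle the four assertions in turn, invoking part~(\ref{ColimitTopology}) only in the proof of part~(\ref{CompactlyGenerated}). For~(\ref{ColimitTopology}) I would start from the fact that, by definition, $\pinf$ carries the final topology for the structure maps $i_N\colon\pN\to\pinf$: a set $U\subseteq\pinf$ is colimit-open exactly when $i_N^{-1}(U)$ is open, i.e.\ an upper set, in each Alexandrov space $\pN$. One inclusion is then immediate: since $i_N$ merely pads a finite $\ztwo$-sequence with zeros it is order-preserving, so the preimage of an upper set of $\pinf$ is an upper set of $\pN$; hence every Alexandrov-open set is colimit-open. For the converse, given a colimit-open $U$, a point $x\in U$ and some $y\geq x$ in $\pinf$, I would choose $N$ large enough that $x=i_N(x')$ and $y=i_N(y')$ for (unique) $x',y'\in\pN$; as the order on both spaces is coordinatewise, $x'\leq y'$, and since $i_N^{-1}(U)$ is an upper set, $y'\in i_N^{-1}(U)$, so $y\in U$. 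Thus $U$ is an upper set, and the two topologies agree.

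For parts~(\ref{T1NotT0}) and~(\ref{Connected}) I would argue purely at the poset level. Every poset Alexandrov space is $T_0$: for distinct $p,q$ antisymmetry forces, say, $p\not\leq q$, and then the minimal neighbourhood $\upset p$ contains $p$ but not $q$; this applies to all $\pN$, including $N=\infty$. For $N\geq 1$ the space is not $T_1$ because $\chi_{\{0\}}<\chi_{\{0,1\}}$ and every open (hence upper) set containing $\chi_{\{0\}}$ contains $\chi_{\{0,1\}}$, so $\{\chi_{\{0,1\}}\}$ is not closed; for $N=0$ the one-point space is trivially $T_1$, which is why it is excluded. For connectedness I would use that, in a clopen decomposition, each piece is simultaneously an upper set and a lower set: if $\pN=U\sqcup V$ with $U,V$ open, disjoint and nonempty, pick $x\in U$ and an index $i$ with $x_i=1$, so the atom $\chi_{\{i\}}\leq x$ lies in $U$; similarly some atom $\chi_{\{j\}}$ lies in $V$; but then $\chi_{\{i,j\}}$ lies in both $U$ and $V$ (they are upper sets) --- or $i=j$ and already $\chi_{\{i\}}\in U\cap V$ --- contradicting $U\cap V=\emptyset$. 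The same argument runs verbatim in $\pinf$, whose points are exactly the $\chi_a$ with $a\subseteq\B{N}$ finite and nonempty; alternatively, by~(\ref{ColimitTopology}) a clopen subset of $\pinf$ restricts to a clopen subset of each connected $\pN$, and the $i_N(\pN)$ form an increasing cover.

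Finally, for part~(\ref{CompactlyGenerated}): each $\pN$ is a finite space, hence compact, so each $i_N(\pN)$ is a compact subspace of $\pinf$. Suppose $A\subseteq\pinf$ meets every compact subspace in a relatively closed set; then in particular $A\cap i_N(\pN)$ is closed in $i_N(\pN)$, and since $i_N\colon\pN\to i_N(\pN)$ is continuous, $i_N^{-1}(A)=i_N^{-1}(A\cap i_N(\pN))$ is closed in $\pN$ for every $N$. By~(\ref{ColimitTopology}), $\pinf$ carries the colimit topology, under which a set whose preimage under every $i_N$ is closed must itself be closed; hence $A$ is closed, so the topology on $\pinf$ is coherent with its compact subspaces, i.e.\ compactly generated. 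I do not expect a genuine obstacle here; the one place to be careful is the two-way comparison in~(\ref{ColimitTopology}), which is the linchpin for~(\ref{CompactlyGenerated}), together with the routine observation in~(\ref{CompactlyGenerated}) that the colimit criterion must be applied to closed rather than open sets. Everything else is standard point-set reasoning about Alexandrov spaces.
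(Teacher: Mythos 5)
Your argument is correct, and parts (\ref{ColimitTopology})--(\ref{Connected}) follow essentially the same route as the paper: the same two-way comparison via upper sets and a large enough $N$ for (\ref{ColimitTopology}), the same $\upset p$ separation and $p<q$ obstruction for (\ref{T1NotT0}), and for (\ref{Connected}) the same join trick --- your detour down to atoms via the lower-set property of clopen sets is harmless but unnecessary, since the paper simply takes $\chi_a$ in one open piece, $\chi_b$ in the other, and places $\chi_{a\cup b}$ in both. Where you genuinely diverge is part (\ref{CompactlyGenerated}). The paper does not use the colimit structure at all there: it shows directly that each basic open set $\B{A}_a^\infty=\upset{\chi_a}$ is compact (any open cover has a member containing $\chi_a$, and that member, being an upper set, already swallows all of $\upset{\chi_a}$), so every point has a compact open neighbourhood basis and the topology is generated by compact opens. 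You instead exploit the presentation of $\pinf$ as a colimit of the finite, hence compact, spaces $\pN$, and verify the $k$-space criterion in its closed-set form, leaning on part (\ref{ColimitTopology}). Both are sound; the paper's version is self-contained and yields the slightly stronger, more explicit fact that the minimal neighbourhoods are compact, while yours is the more standard categorical argument and makes transparent exactly where the finiteness of each $\pN$ enters.
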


\begin{proof}
  For any $i\in\B{N}$ and $a\in\EFin$, we define
  \begin{equation}
    \B{A}^\infty_i := \upset \chi_{\{i\}}
    \quad \text{ and } \quad
    \B{A}_a^\infty := \bigcap_{i\in a} \B{A}_i^\infty = \upset \chi_a
  \end{equation}
  which are open in the Alexandrov topology.

  Proof of \eqref{ColimitTopology}: Let $i_N\colon \pN\to \pinf$ be
  the structure maps of the colimit.  We need to prove that an open
  set in one topology is open in the other, and vice versa.  The set
  $\{\B{A}^\infty_a|\ a\in\EFin\}$ is a basis for the Alexandrov
  topology since each $\B{A}_a^\infty$ is an upper set.
  Then
  \begin{equation}
 i_N^{-1}(\B{A}^\infty_a) =
  \begin{cases}
    \B{A}^N_{a} & \text{ if } a\subseteq \ord{N},\\
    \emptyset & \text{ if } a\nsubseteq \ord{N}
  \end{cases}
  \end{equation}
  is an open set in $\pN$ for any $N\geq 0$ and $a\in\EPow{N}$.
  Therefore, every open set in the Alexandrov topology is open in the
  colimit topology.  Now, assume $U\subseteq \pinf$ is open in the
  colimit topology.  We can assume every sequence in $\pinf$ is of the
  form $\chi_a$ for a unique $a\in\EFin$ since $z = \chi_{\nu(z)}$ for
  all $z\in\pinf$.  Next, assume $\chi_a\in U$ and we have $\chi_a\leq
  \chi_b$ for some $\chi_b\in\pinf$.  We need to show that $\chi_b\in
  U$.  Since $a\subseteq b\in\EFin\subset\text{\boldmath $2$}^\B{N}$, we have a natural number $N = \max(b)\geq
  \max(a)\geq1$.  Moreover, we have an inequality
  \begin{equation}
 i_N^{-1}(\chi_a)= \chi_a \leq \chi_b =
    i_N^{-1}(\chi_b)
\end{equation}
 in $\pN$.  As
  $i_N^{-1}(U)$ is open in the Alexandrov topology of $\pN$, 
it follows that $\chi_b\in i_N^{-1}(U)$ in $\pN$, which in turn implies
  $\chi_b\in U$.

  Proof of \eqref{T1NotT0}: Let $p,q\in\pN$, $p\neq q$. Then
  $\nu(p)\neq\nu(q)$.  Let us suppose without loss of generality that
  $i\in\nu(p)$ and $i\notin\nu(q)$. Then $q\notin\upset p$ which
  proves that $\pN$ is $T_0$.  On the other hand if $p\leq q$ then for any
  open set $U\subseteq\pN$ such that $p\in U$ also $q\in U$ (as $U$ is
  an upper set). It follows that $\pN$ is not $T_1$.

  Proof of \eqref{Connected}: Suppose there exists a non-empty subset
  $V \subsetneq \pN$ that is both open and closed.
Let $\chi_a\in V$ and
  $\chi_b\in\pN\setminus V$. Then, because
  $V$ and
  $\pN\setminus V$ are open, we have $\chi_{a\cup b}\in V$ and
  $\chi_{a\cup b}\in \pN\setminus V$,
   which
  is a contradiction.

  Proof of \eqref{CompactlyGenerated}: In order to prove our
  assertion, we need to show that for any $a\in \EFin$ the set
  $\B{A}^\infty_a$ is compact. Let $a\in\EFin$ and suppose that $\C{U}
  = \{U_i\}_{i\in I}$ is an open covering of $\B{A}^\infty_a$.  Since
   $\chi_a\in \B{A}_a^\infty$ and  $\C{U}$
  is a covering, there exists $j\in I$ such that $\chi_a\in U_j$.
  As $U_j$ is open in the Alexandrov topology, we obtain $\upset
  \chi_a = \B{A}_a^\infty \subseteq U_j$.  Consequently, for any
  finite subset $\alpha$ of $\EFin$, the set $\bigcup_{a\in\alpha}
  \B{A}^\infty_a$ is compact.  The result follows.
\end{proof}

\vspace{1.5mm}

\subsection{Continuous maps between partition spaces}
\label{ContinuousMapsOfTheProjectiveSpaces}~

\noindent
In what follows in this subsection, 
unless explicitly stated otherwise, $N$ will be a
 natural number or~$\infty$.  Accordingly, the set
$\{0,\ldots,N\}$ will be a finite set or will be $\B{N}$ if
$N=\infty$.  For example, a permutation
$\sigma:\{0,\ldots,N\}\rightarrow \{0,\ldots,N\}$ is either a finite
permutation or an arbitrary bijection $\B{N}\rightarrow\B{N}$.

Let $\Top(\pN)$ be the lattice of open subsets of~$\pN$.  It is
obvious that any continuous map $f:\pN\rightarrow\pM$ defines a
morphism between lattices of open sets of the form
$\G{X}_f:\Top(\pM)\longrightarrow\Top(\pN)$, where
\begin{equation}
  \G{X}_f(U):= f^{-1}(U).
\end{equation}
Conversely, we have the following:
\begin{prop}\label{PosetToFunction}
  Let $M$ and $N$ be finite natural numbers or $\infty$. Let
  $\G{X}:\Top(\pM)\longrightarrow\Top(\pN)$ be a lattice
  morphism with the property that
\begin{subequations}
\begin{gather}\label{boundcond}
  \bigcup_{i\in\{0,\ldots,M\}} \G{X}(\B{A}_i^M)=\pN,\\
  \bigcap_{i\in a}\G{X}(\B{A}_i^M)=\emptyset,\quad\text{for all
    infinite}\quad a\subseteq\{0,\ldots,M\}.
\label{emptycond}
\end{gather}
\end{subequations}
Then there exists a unique continuous function
$f_{\G{X}}:\pN\rightarrow\pM$ such that, for
all open subsets $U\subseteq\pM$, we have
$\G{X}(U)=f_{\G{X}}^{-1}(U)$.
\end{prop}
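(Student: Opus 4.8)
The plan is to read the map $f_{\G{X}}$ off the images of the subbasic open sets. There is no choice in the matter: if a continuous $f$ satisfies $\G{X}(U)=f^{-1}(U)$ for all $U$, then $z\in\G{X}(\B{A}_i^M)$ exactly when $f(z)\in\B{A}_i^M$, i.e. exactly when $i$ belongs to the index set of $f(z)$. So I would set
\[
  s(z):=\{\,i\in\{0,\ldots,M\}\mid z\in\G{X}(\B{A}_i^M)\,\}\qquad\text{and}\qquad f_{\G{X}}(z):=\chi_{s(z)}.
\]
The proof then has two parts: (i) $\chi_{s(z)}$ is genuinely a point of $\pM$, i.e. $s(z)$ is a nonempty finite subset of $\{0,\ldots,M\}$; and (ii) $\G{X}(U)=f_{\G{X}}^{-1}(U)$ for every open $U\subseteq\pM$. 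Granting (ii), continuity of $f_{\G{X}}$ is automatic because its preimages of open sets are exactly the open sets $\G{X}(U)$; and uniqueness follows because $\pM$ is $T_0$ (Theorem~\ref{TopologicalProperties}), so a point of $\pM$ is determined by the family of open sets containing it, and any $g$ with $g^{-1}(U)=\G{X}(U)$ puts $g(z)$ into exactly the same open sets as $f_{\G{X}}(z)$.

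Part (i) is exactly what conditions \eqref{boundcond} and \eqref{emptycond} are for. Nonemptiness of $s(z)$ is \eqref{boundcond}: every $z\in\pN$ lies in $\bigcup_i\G{X}(\B{A}_i^M)=\pN$. Finiteness is \eqref{emptycond}: if $s(z)$ were an infinite subset of $\{0,\ldots,M\}$, then $z$ would lie in $\bigcap_{i\in s(z)}\G{X}(\B{A}_i^M)$, which is empty. Conceptually, $\{U\mid z\in\G{X}(U)\}$ is a proper completely prime filter of $\Top(\pM)$, and the two conditions are precisely what forces it to be the principal filter at the honest point $\chi_{s(z)}$, rather than one of the ``phantom'' points that are responsible for $\pinf$ failing to be sober.

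For part (ii) I would establish the equivalence $z\in\G{X}(U)\iff\chi_{s(z)}\in U$, for all open $U$ and all $z$. The implication $\Leftarrow$ is the easy half: if $\chi_{s(z)}\in U$ then, $U$ being an upper set, $\B{A}_{s(z)}^M=\upset{\chi_{s(z)}}\subseteq U$, hence $\G{X}(\B{A}_{s(z)}^M)\subseteq\G{X}(U)$ by monotonicity; and $\B{A}_{s(z)}^M=\bigcap_{i\in s(z)}\B{A}_i^M$ is a \emph{finite} intersection (using that $s(z)$ is finite and nonempty), so $\G{X}(\B{A}_{s(z)}^M)=\bigcap_{i\in s(z)}\G{X}(\B{A}_i^M)$, which contains $z$ by the very definition of $s(z)$. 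For $\Rightarrow$ I would write $U$ as the union of the basic open sets it contains, $U=\bigcup_{\chi_c\in U}\B{A}_c^M$; since $\G{X}$ preserves unions (as does the model morphism $\G{X}_f=f^{-1}$), $z\in\G{X}(U)$ forces $z\in\G{X}(\B{A}_c^M)=\bigcap_{i\in c}\G{X}(\B{A}_i^M)$ for some $c$ with $\chi_c\in U$ (again a finite meet, since $c$ is finite), whence $c\subseteq s(z)$, i.e. $\chi_c\leq\chi_{s(z)}$, so $\chi_{s(z)}\in\upset{\chi_c}\subseteq U$. The equivalence gives $\G{X}(U)=\{z\mid\chi_{s(z)}\in U\}=f_{\G{X}}^{-1}(U)$, as desired.

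I expect part (i) --- checking that $\chi_{s(z)}$ is an honest point --- to be the only substantive step; everything in (ii) is formal manipulation with the Alexandrov topology (every open set is an upper set, the smallest neighbourhood of $\chi_{s(z)}$ is the finite intersection $\B{A}_{s(z)}^M$). The one place where more than the binary operations of $\G{X}$ enters is the $\Rightarrow$ half of (ii), where $\G{X}$ must be applied to an a priori infinite union of basic open sets.
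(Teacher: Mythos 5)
Your construction is exactly the paper's: define $f_{\G{X}}(z):=\chi_{s(z)}$ with $s(z)=\{i\;|\;z\in\G{X}(\B{A}_i^M)\}$, use \eqref{boundcond} for nonemptiness and \eqref{emptycond} for finiteness of $s(z)$, and the argument is correct. The only difference is thoroughness: the paper checks $\G{X}(\B{A}_i^M)=f_{\G{X}}^{-1}(\B{A}_i^M)$ on the subbasis and immediately declares uniqueness and continuity, whereas your part (ii) actually propagates the identity to arbitrary open sets, which is what the statement asserts. In doing so you correctly isolate the one genuinely non-formal input, namely that $\G{X}$ must preserve \emph{arbitrary} unions when $U$ is decomposed as $\bigcup_{\chi_c\in U}\B{A}_c^M$. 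This caveat is not cosmetic: a morphism preserving only binary joins and meets can satisfy both displayed conditions and still violate $\G{X}(U)=f_{\G{X}}^{-1}(U)$ on an infinite union (for $M=\infty$, $N=0$ the prime filter separation theorem produces a prime filter of $\Top(\pinf)$ containing exactly the $\B{A}_i^M$ with $i\in s$ together with the open set $\pinf\setminus\{\chi_c\,|\,c\subseteq s\}$, which misses $\chi_s$). So the hypothesis ``lattice morphism'' has to be read as preserving arbitrary joins, as $f^{-1}$ does and as the authors implicitly assume in their remark following the proposition; your proof makes this explicit where the paper's does not.
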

\begin{proof}
  We define a map
  $f_{\G{X}}:\pN\rightarrow\pM$ as
\begin{equation}
  f_{\G{X}}:z\longmapsto \chi_a
  ,\quad\text{where}\quad a:=\{i\in\{0,\ldots,M\}\;|\;z\in \G{X}(\B{A}^M_i)\}.
\end{equation}
We observe that $a$ is non-empty due to the
condition~\eqref{boundcond}, and finite due to the
condition~\eqref{emptycond}.  By definition, 
\begin{equation}\label{equivalences}
z\in
f_{\G{X}}^{-1}(\B{A}_i^M) \Leftrightarrow
f_{\G{X}}(z)\in \B{A}_i^M \Leftrightarrow i\in
\nu(f_{\G{X}}(z))\Leftrightarrow z\in \G{X}(\B{A}_i^M).
\end{equation}
This proves the continuity of $f_{\G{X}}$ because the sets $\B{A}_i^M$ form a subbasis
of the Alexandrov topology. The uniqueness follows from combining
\eqref{equivalences} with the fact that knowing for all $i$'s
whether or not $z'\in\B{A}_i^M$ determines $z'\in\pM$.
\end{proof}
\noindent
Note that the conditions~\eqref{boundcond} and \eqref{emptycond} are
satisfied for $\G{X}_f$ for any continuous $f$ because $\bigcap_{i\in
  a}\B{A}_i=\emptyset$ for any infinite $a$, and $f^{-1}$ preserves
infinite unions and intersections.

Finally, in order to characterize in Theorem~\ref{homeothm} the homeomorphisms between the
projective spaces $\pN$, we will need the following technical lemma.
\begin{lemma}\label{condhom}
  Let $N$ and $M$ be finite natural numbers or $\infty$.  Let $f\colon
  \pN\to \pM$ be a continuous map of Alexandrov spaces.
  \begin{enumerate}
  \item If $f$ is injective, then $|\nu(z)|\leq |\nu(f(z))|$ for any
    $z\in \pN$.
  \item If $f$ is a homeomorphism, then $|\nu(z)|= |\nu(f(z))|$ for any
    $z\in \pN$.
  \end{enumerate}
\end{lemma}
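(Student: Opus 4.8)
The plan is to discard the topology and argue entirely with posets. By definition of the Alexandrov topology the open sets of $\pN$ are the upper sets, so $f$ is continuous exactly when $f^{-1}$ carries upper sets to upper sets; feeding in the upper set $\{w\mid w\geq f(z)\}$ one sees that this is the same as $f$ being order-preserving, i.e. $z\leq z'$ implies $f(z)\leq f(z')$. Under the poset isomorphism $\nu\colon\pN\to\EPow{N}$ of Proposition~\ref{usetscor} the number $|\nu(z)|$ is merely the cardinality of the finite set named by $z$, and a strict inequality $z<z'$ forces $|\nu(z)|<|\nu(z')|$, since a proper inclusion of finite sets strictly raises cardinality. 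After this reduction everything becomes a comparison of cardinalities at the two ends of a chain.

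For (1): list $\nu(z)=\{i_1,\dots,i_k\}$ with $k=|\nu(z)|$, so that $\{i_1\}\subsetneq\{i_1,i_2\}\subsetneq\cdots\subsetneq\{i_1,\dots,i_k\}$ gives a chain $z_1<z_2<\cdots<z_k=z$ of $k$ elements of $\pN$. Pushing it forward by the monotone map $f$ produces a chain $f(z_1)\leq f(z_2)\leq\cdots\leq f(z)$ of $k$ elements of $\pM$, and injectivity of $f$ makes the inequalities strict. A strictly increasing chain of $k$ elements of $\EPow{M}$ whose bottom term has cardinality at least $1$ must have top term of cardinality at least $k$; hence $|\nu(f(z))|\geq k=|\nu(z)|$.

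For (2) I would run the dual argument: put $m=|\nu(f(z))|$ and choose a maximal chain $w_1<w_2<\cdots<w_m=f(z)$ below $f(z)$ in $\pM$, with $|\nu(w_l)|=l$. The aim is to lift this through $f$, using surjectivity, to a strictly increasing chain $z_1<z_2<\cdots<z_m=z$ of $\pN$ with $f(z_l)\leq w_l$ for each $l$; then $|\nu(z)|\geq m=|\nu(f(z))|$ exactly as in~(1). I would try to build the lift by descending recursion from $z_m:=z$, seeking at each stage some $z_{l-1}<z_l$ inside the closed set $f^{-1}(\{w\mid w\leq w_{l-1}\})$, which is nonempty because $f$ is onto. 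Once (1) and (2) are in hand the ``Therefore'' is immediate, a bijection being both injective and surjective; as a safety net it also follows from (1) alone, by summing $|\nu(z)|\leq|\nu(f(z))|$ over all $z\in\pN$ and re-indexing the right-hand side via the bijection $f$, which forces equality in every summand.

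The step I expect to be the real obstacle is precisely this chain-lifting in Part~(2): surjectivity furnishes a preimage of each single $w_l$, but weaving those preimages into one chain that lies below $z$ is not automatic, and I anticipate the argument must exploit monotonicity of $f$ — equivalently, that $f^{-1}$ preserves upper sets — to keep the successive choices of $z_l$ mutually compatible. Everything else is routine arithmetic of cardinalities along chains.
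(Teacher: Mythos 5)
Your part (1) is correct and is essentially the paper's own argument: both you and the authors compute $|\nu(z)|$ from the length of a maximal chain ending at $z$ and push such a chain forward through the injective monotone map. The ``therefore'' clause as you use it later (for homeomorphisms, applying (1) to both $f$ and $f^{-1}$) is also sound, and your finite summing argument is a legitimate alternative for continuous bijections when $N$ is finite.

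The obstacle you flag in part (2) is a genuine gap, and in fact it cannot be closed, because statement (2) is false as written. Identify $\pN$ with the poset of nonempty subsets via $\nu$ and take $N=2$, $M=1$. Define $f\colon\P^2(\ztwo)\to\P^1(\ztwo)$ by $f(\{0\})=\{0,1\}$, $f(\{1\})=\{0\}$, $f(\{2\})=\{1\}$, and $f(a)=\{0,1\}$ for every $a$ with $|a|\geq 2$. This map is monotone (hence continuous) and surjective, yet $|\nu(\{0\})|=1<2=|\nu(f(\{0\}))|$. The reason your chain-lifting must fail is visible here: the chain $\{0\}<\{0,1\}$ below $f(\{0\})$ admits no lift below $\{0\}$, since $\{0\}$ is a minimal element of $\P^2(\ztwo)$; surjectivity supplies preimages of the $w_l$, but nothing places any of them under $z$. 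The paper's own proof of (2) simply asserts the existence of a lifted chain $a_1<\cdots<a_m<z$ with $f(a_i)=b_i$ without justification, and the example above refutes that assertion. Fortunately the lemma is only ever invoked for homeomorphisms $f\colon\pN\to\pN$, where part (1) applied to $f$ and to the continuous inverse $f^{-1}$ already yields $|\nu(z)|=|\nu(f(z))|$, so the downstream theorem is unaffected; but no refinement of the lifting strategy will rescue part (2) for general continuous surjections.
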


\begin{proof}
  Observe that for any $z\in \pN$ one can compute $|\nu(z)|$
  as
 \begin{equation}\label{19}
 |\nu(z)| = \max\{n\in\ord{N}|\ a_1<\cdots<a_n=z,\ a_i\in \pN\}.
\end{equation}
Here the symbol $x<y$ means $x\leq y$ and $x\neq y$. On the other hand, any map between spaces equipped with preorders is continuous with respect to
the Alexandrov topologies induced by these preorders  if and only if
it is monotonous, i.e., it preserves the preorders. 
 Therefore, if $f$ is continuous (i.e.~preorder preserving) and injective, then \eqref{19} implies that
  $|\nu(z)|\leq |\nu(f(z))|$ for any $z\in \pN$. 
Finally, if $f$ is homeomorphism, then we also have $|\nu(f(z))|\leq |\nu(f^{-1}(f(z)))|=|\nu(z)|$,
so that $|\nu(z)|= |\nu(f(z))|$ for any   $z\in \pN$.
\end{proof}
\noindent
Note that any continuous bijection between any two finite homeomorphic topological spaces (not necessarily Hausdorff) 
is always a homeomorphism. Hence, for any finite $N$, a continuous bijection from $\pN$ to $\pN$ is automatically
a homeomorphism, so that it enjoys the property (2) of the lemma above.

\begin{thm}\label{homeothm}
Let $N$ be a finite natural number or $\infty$.
  A map $f\colon\pN\rightarrow \pN$
  is a homeomorphism if and only if there exists a bijection
  $\sigma\colon \ord{N}\to \ord{N}$ such that
$f(\chi_a) = \chi_{\sigma(a)}$ for any subset~$a\subseteq\B{N}$.
\end{thm}

\begin{proof}
 We consider a bijection
  $\sigma\colon\ord{N} \to \ord{N}$.  It induces a bijection of the
  form 
\begin{equation}
f_\sigma\colon\pN\ni\chi_a\longmapsto \chi_{\sigma(a)}\in\pN
\end{equation}
 with the inverse
  $(f_\sigma)^{-1}=f_{\sigma^{-1}}$.  Since
  $f_\sigma(\B{A}_i)=\B{A}_{\sigma(i)}$ for all $i$ and the set
of all $\B{A}_i$'s is a subbasis for the topology of $\pN$, we conclude that $f_\sigma$
  is a homeomorphism. 

 Conversely, assume we have a homeomorphism
  $f\colon \pN\to \pN$.  Consider $\ell\subseteq \ord{N}$ and
  $\chi_\ell\in \pN$.  By Lemma~\ref{condhom}, the function $f$
  satisfies $|\nu(z)| = |\nu(f(z))|$ for any $z\in \pN$.  Therefore, applying
the support map $\nu$ to both sides of the equality $f(\chi_{\{i\}})=\chi_{\{\sigma(i)\}}$
 determines a unique map $\sigma\colon\ord{N}\to\ord{N}$ satisfying this equality:
\begin{equation}
\nu(f(\chi_{\{i\}}))=\nu(\chi_{\{\sigma(i)\}})=\{\sigma(i)\}.
\end{equation}
The inverse of thus defined map is given by the formula $\{\sigma^{-1}(i)\}=\nu(f^{-1}(\chi_{\{i\}}))$. 

Next, we proceed by induction on the cardinality of $a\subseteq\ord{N}$.   Assume that we have
  already proven that $f(\chi_a)=\chi_{\sigma(a)}$ for all
  $a$ such that $0<|a|\leq n$.  Pick $a\subseteq \ord{N}$
  with $|a|=n$ and $j\in\ord{N}\setminus a$.  Then, since the continuity of $f$ is
equivalent to $f$ being monotonous, we obtain $\chi_{\sigma(a)}=f(\chi_a)\leq f(\chi_{a\cup\{j\}})$ in
  $\pN$. Hence  $ f(\chi_{a\cup\{j\}})=\chi_{\sigma(a)\cup \ell}$ for some $\ell \subseteq \ord{N}$. 
On the other hand, by Lemma~\ref{condhom}, we see that
  $|\nu(f(\chi_{a\cup\{j\}}))|=n+1$, so that $\ell=\{k\}$
   for some
  $k\notin\sigma(a)$.  It remains to prove that
  $k=\sigma(j)$.  By definition
  $\chi_{\sigma(a)\cup\{k\}}\in \B{A}_{k}$.  Therefore,
  $\chi_{a\cup\{j\}}\in f^{-1}(\B{A}_k)=\B{A}_{\sigma^{-1}(k)}$, whence
   $\sigma^{-1}(k)\in a\cup\{j\}$. Combining this with  $\sigma^{-1}(k)\notin
  a$ yields $\sigma(j)=k$, as needed.
\end{proof}

We end this subsection by introducing a monoid that acts on $\pinf$
by continuous maps and is pivotal in our classification theorem. It
is a monoid that labels all finite sequences that can be  formed
from a given finite set.

\begin{dfn}\label{FiniteFibers}
  A surjection $\alpha\colon \B{N}\to \B{N}$ is called {\em tame} if
  \begin{enumerate}
  \item $\alpha^{-1}(i)$ is finite for any $i\in\B{N}$,
  \item $|\alpha^{-1}(i)|>1$ for finitely many $i\in\B{N}$.
\end{enumerate}
We denote the
  monoid of all such tame surjections by $\C{M}$.
\end{dfn}
\noindent
It is clear that the composition of any two tame surjections is again
a tame surjection, and that the monoid is generated by bijections and
the following tame surjection:
\begin{equation}
\partial(i) = \begin{cases}
  i & \text{ if } i=0,\\
  i-1 & \text{ if } i>0\ .
\end{cases}
\end{equation}

We can view the elements of $\pinf$ as maps from $\mN$ to $\ztwo$,
and on such maps the monoid $\C{M}$ acts by pullbacks.  Moreover,
the tameness
property ensures that such pullbacks preserve $\pinf$ and 
\begin{equation}\label{fSigma}
  f_\alpha(\chi_a):= \alpha^*(\chi_a)=\chi_{\alpha^{-1}(a)}\quad\text{ for all }
  a\in\Fin\setminus\emptyset,
\end{equation}
guarantees that they are morphisms of posets. Thus
we obtain an action of  $\C{M}$ on $\pinf$ by maps continuous in the
Alexandrov topology.  Observe that this pullback representation of the
monoid $\C{M}$ is faithful.
Note also that Theorem~\ref{homeothm} can be rephrased to link the bijections from $\ord{N}$
to  $\ord{N}$ with the homeomorphisms from $\pN$ to $\pN$ by the formula
$f(\chi_a) = \chi_{\sigma^{-1}(a)}$ for any subset~$a\subseteq\B{N}$. This makes Theorem~\ref{homeothm}
 compatible with
\eqref{fSigma}.
\vspace{1.5mm}

\subsection{The lattice of open subsets of $\pinf$}\label{latopen}
~

\noindent
In this subsection, we provide  a direct generalization of
\cite[Subsect.~2.2]{HKMZ:PiecewisePrincipalComoduleAlgebras}
 needed
to upgrade the flabby-sheaf classification of ordered $N$-coverings
\cite[Cor.~4.3]{HKMZ:PiecewisePrincipalComoduleAlgebras} to a
classification of arbitrary finite ordered coverings we arrive at in
Lemma~\ref{Equivalence}.
\begin{lemma}\label{LatticeMorphism}
  Let $(\Lat(A), \cap, +)$ denote the lattice of all ideals in
  an algebra $A$.
  Assume that 
$(I_i)_{i\in\mN}$ is a sequence of
  ideals such that only finitely many of them are different from~$A$
and that the lattice they generate is distributive.
  Then, for any open subset $U\subseteq \pinf$, the map
  given by
  \begin{equation}\label{Rdef}
    R^{(I_i)_i}(U) := \bigcap_{a\in \nu(U)}\sum_{i\in a} I_i
  \end{equation}
  defines a morphism of lattices
  $R^{(I_i)_i}\colon\Top(\pinf)\to \Lat(A)$.
\end{lemma}

\begin{proof}
  Since the map $\nu\colon \pinf\to \Fin\setminus\{\emptyset\}$
given by (\ref{invmap}) for $N=\infty$
 is a bijection, we have
  \begin{equation}
 \nu(U_1\cap U_2) = \nu(U_1)\cap \nu(U_2) \quad\text{ and }\quad
     \nu(U_1\cup U_2) = \nu(U_1)\cup\nu(U_2)
  \end{equation}
  for any $U_1,U_2\in\Top(\pinf)$.
  In order to prove that $R^{(I_i)_i}$ is a morphism
  of lattices, we need to show that
  \begin{equation}\label{lattmor}
    R^{(I_i)_i}(U_1\cap U_2)=R^{(I_i)_i}(U_1) +
R^{(I_i)_i}(U_2),\quad
    R^{(I_i)_i}(U_1\cup U_2)=R^{(I_i)_i}(U_1)\cap R^{(I_i)_i}(U_2),
  \end{equation}
  for all $U_1,U_2\in\Top(\pinf)$.  
Note that the latter equality above is trivially satisfied.

  To prove the former identity, first we observe that for all upper sets
  $\alpha_1,\alpha_2\subseteq\Fin$ 
  \begin{equation}\label{cup}
    \alpha_1\cap\alpha_2=
\{a_1\cup a_2\;|\;a_1\in\alpha_1,a_2\in\alpha_2\}.
  \end{equation}
  Indeed, 
since $a_1\subseteq a_1\cup a_2$ and $a_2\subseteq a_1\cup a_2$, we
  see that the left hand side contains the right hand side.  The other
  inclusion follows from the fact that
$a=a\cup a$.  Next, we note that although the
  intersection in~\eqref{Rdef} is potentially infinite, there
  are only finitely many ideals different from~$A$. This fact allows us
to use the
  distributivity of the lattice generated by the ideals~$I_i$.
Furthermore, since $\nu$ is a homeomorphism (see below~\eqref{homeo})
with respect to the Alexandrov topologies (open sets are upper sets), 
we can use \eqref{cup} to conclude that 
\[
\forall\;U_1,U_2\in\Top(\pinf):\;\;
\nu(U_1\cap U_2)=\nu(U_1)\cap\nu(U_2)
=\{a_1\cup a_2\;|\;a_1\in\nu(U_1),a_2\in\nu(U_2)\}.
\]
Combining all this together,
we obtain:
  \begin{align}
    \bigcap_{a\in \nu(U_1\cap U_2)}\sum_{i\in a} I_i
    = & \bigcap_{a\in\nu(U_1)\cap\nu(U_2)}
 \sum_{i\in a}I_i\nonumber\\
    = & \bigcap_{a_1\in\nu(U_1)}\bigcap_{a_2\in\nu(U_2)}
\left(\sum_{i\in a_1} I_i+\sum_{j\in a_2} I_j\right)\nonumber\\
= & \bigcap_{a_1\in\nu(U_1)}\left(
\sum_{i\in a_1} I_i+\bigcap_{a_2\in\nu(U_2)}\sum_{j\in a_2} I_j\right)\nonumber\\
    = & \bigcap_{a_1\in\nu(U_1)}\sum_{i\in a_1}I_i
    +\bigcap_{a_2\in\nu(U_2)}\sum_{j\in a_2}I_j.
  \end{align}
The result follows.
\end{proof}
\vspace*{1.5mm}

\subsection{Sheaves and patterns on Alexandrov spaces}\label{SheavesAndPatterns}
~

\noindent
In~\cite{Maszczyk:Patterns}, Maszczyk defined the topological dual of a
sheaf, called {\em a pattern}, akin to Leray's original definition of
sheaves~\cite[p.~303]{Leray:SelectedWorksVolI} using closed sets
instead of open sets.  A pattern is a sheaf-like object defined on the
category of closed subsets $\Cl(X)$ of a topological space $X$ with
inclusions.  Explicitly, a pattern of sets on a topological space $X$
is a covariant functor $F\colon \Cl(X)^\op\to {\bf Set}$ to the category of sets
satisfying the
property that, for any given {\em finite} closed covering
$\{C_\lambda\}_{\lambda}$ of $X$, the canonical diagram
\begin{equation}
 F(X) \to \prod_{\lambda} F(C_\lambda)\rightrightarrows
\prod_{\lambda,\mu} F(C_\lambda\cap C_\mu)
\end{equation}
 is an equalizer diagram.
A pattern $F$ on a topological space is called {\em global} if for any
inclusion of closed sets $C'\subseteq C$ the restriction morphism
$F(C)\to F(C')$ is an epimorphism.

 We would like to note
that for compact Hausdorff spaces Leray's definition of
{\em faisceau continu} is equivalent to the
definition of a sheaf.  However, in this
paper we only consider sheaves over Alexandrov spaces which are of
completely different nature, and thus we cannot exchange these two
concepts.  On the other hand, for any finite
Alexandrov space, we show below
that the category of global patterns and the category of flabby
sheaves are equivalent up to a natural duality.

It follows from Lemma~\ref{FlipTopology}
that the lattice of open sets of an Alexandrov space
$(P,\leq)$ is isomorphic to the lattice of closed sets of the dual
Alexandrov space $(P,\leq)^\op$.  Hence:
\begin{prop}\label{PatternsEquivalentToSheaves}
  Let $(P,\leq)$ be a finite preordered set.  The category of (flabby)
  sheaves on an Alexandrov space $(P,\leq)$ is isomorphic to the
  category of (global) patterns on the opposite Alexandrov space
  $(P,\leq)^\op$.
\end{prop}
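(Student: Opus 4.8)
The plan is to promote Lemma~\ref{FlipTopology} to an isomorphism of categories by transporting functors along the order-reversal. Write $\iota$ for the assignment $U\mapsto U$; by Lemma~\ref{FlipTopology} and the remark following it, $\iota$ is a lattice isomorphism from the lattice $\Top(P)$ of open subsets of $(P,\leq)$ onto the lattice $\Cl((P,\leq)^\op)$ of closed subsets of $(P,\leq)^\op$, so in particular it preserves inclusions, finite intersections and finite unions. Composing with $\iota$ (respectively $\iota^{-1}$) then turns a presheaf on $(P,\leq)$, i.e.\ a contravariant functor out of $\Top(P)$, into a covariant functor out of $\Cl((P,\leq)^\op)^\op$, i.e.\ a pre-pattern on $(P,\leq)^\op$, and likewise sends natural transformations to natural transformations; since $\iota$ is a bijection on objects this gives an isomorphism of categories $\Phi$ between presheaves on $(P,\leq)$ and pre-patterns on $(P,\leq)^\op$, with inverse obtained the same way from $\iota^{-1}$. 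The argument is insensitive to the target category, working verbatim for sheaves and patterns of sets, of algebras, or of objects of any category with finite products and equalizers.

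It remains to check that $\Phi$ and its inverse exchange the flabbiness condition with the globalness condition, and the sheaf axiom with the pattern axiom. For flabbiness: a sheaf $F$ is flabby iff $F(U)\to F(U')$ is an epimorphism for every inclusion $U'\subseteq U$ of open sets of $(P,\leq)$ --- equivalently, iff $F(P)\to F(U)$ is epi for all open $U$, since that restriction factors through $F(U')$ --- and under $\iota$ the inclusions of opens of $(P,\leq)$ are exactly the inclusions of closed sets of $(P,\leq)^\op$, so this is precisely the requirement that $\Phi(F)$ have all restriction maps along inclusions of closed sets epimorphic, i.e.\ that $\Phi(F)$ be global. For the gluing axioms: since $P$ is finite the lattice $\Top(P)$ is finite, so every open cover of an open set may be taken to be a finite family of distinct opens, and the sheaf condition for $F$ --- that for every open $U$ and every such cover $\{U_\lambda\}$ of $U$ the canonical map $F(U)\to\prod_\lambda F(U_\lambda)$ be the equalizer of the pair $\prod_\lambda F(U_\lambda)\rightrightarrows\prod_{\lambda,\mu}F(U_\lambda\cap U_\mu)$ --- is carried by $\iota$, term by term (inclusions to inclusions, intersections to intersections, ``cover of $U$'' to ``closed cover of the closed set $\iota(U)$''), exactly onto the equalizer requirement in the definition of a pattern on $(P,\leq)^\op$; reading the pattern axiom as applying to each finite closed covering of each closed subset (equivalently: a pattern restricts to a pattern on each closed subspace, and a cover of a closed subspace is a cover of that subspace regarded as the ambient space), these two bundles of conditions are literally the same condition read through the order-reversal.

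Combining the two checks, $\Phi$ restricts to a bijective-on-objects, bijective-on-morphisms functor from flabby sheaves on $(P,\leq)$ to global patterns on $(P,\leq)^\op$, with inverse $\Phi^{-1}$, which is the asserted isomorphism. I do not expect a genuine obstacle: the mathematical content is all in Lemma~\ref{FlipTopology}, and the only step demanding care is the bookkeeping of the previous paragraph --- making sure the sheaf axiom in its usual form really corresponds under $\iota$ to the pattern axiom (and not, say, only to its restriction to covers of the whole space), together with the standard reformulation of flabbiness used above.
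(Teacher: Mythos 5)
Your proof is correct and follows the same route as the paper's: both rest entirely on Lemma~\ref{FlipTopology}, which identifies the opens of $(P,\leq)$ with the closeds of $(P,\leq)^\op$, and both use the finiteness of $P$ only in the pattern-to-sheaf direction to convert an arbitrary open cover into a finite closed cover. Your explicit verification that flabbiness corresponds to globalness, and your observation that the pattern axiom must be read as applying to finite closed covers of every closed subset (not merely of the whole space) for the converse to yield the full sheaf condition, are points the paper's proof leaves implicit.
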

\begin{proof}
  Since the lattice of closed subsets of $(P,\leq)^\op$ is isomorphic
  to the lattice of open subsets of $(P,\leq)$, we
  conclude that any (flabby) sheaf on $(P,\leq)$ is a (global) pattern
  on $(P,\leq)^\op$ regardless of $P$ being finite.  Conversely,
  assume $F$ is a (global) pattern on $(P,\leq)^\op$, and let $\C{U}$
  be an open cover of $(P,\leq)$.  As $P$ is finite, the number of
  open and closed subsets of $P$ is finite as well.  Thus $\C{U}$ is a
  finite collection of closed subsets of $(P,\leq)^\op$  covering $P$.
  Furthermore,
  \begin{equation}\label{Equalizer}
    F(P) \to \prod_{U\in\C{U}} F(U)\rightrightarrows
    \prod_{U,U'\in\C{U}} F(U\cap U')
  \end{equation}
  is an equalizer diagram because  $F$ is a (global) pattern on $(P,\leq)^\op$.  
Hence  $F$ is a sheaf.
\end{proof}
\noindent
The restriction that $P$ is finite comes from the definition of a
pattern.  A pattern is a sheaf-like object where
Diagram~\eqref{Equalizer} is an equalizer for only {\em finite} closed
coverings, as opposed to a sheaf where Diagram~\eqref{Equalizer} is an
equalizer for every (finite or infinite) open covering.

 Next, we consider a poset $(P,\leq)$ as a category by letting
\begin{equation}
Ob(P)=P \quad \text{and}\quad
\Hom_P(p,q)=\left\{
\begin{array}{cl}
\{p\to q\} & \text{if}\ p\leq q,\\
  \emptyset & \text{otherwise}.
\end{array}\right.
\label{PosetCategory}
\end{equation}
Then a covariant
  functor $X\colon P\to \text{\bf Vect}_k$ 
to the category of vector spaces over $k$ is just a collection of 
vector spaces
  $\{X_p\}_{p\in P}$ together with linear maps
  $T_{qp}\colon X_p\to X_q$ such that (i) $T_{pp} = \id_{X_p}$ and
  (ii) $T_{rq}\circ T_{qp} = T_{rp}$.  
Any such a covariant functor will be called
   a right {\em $P$-module}.  The category of right $P$-modules and
  their morphisms will be denoted by $\lmod_{P}$. We will call a
  $P$-module flabby if each $T_{pq}$ is an epimorphism.  If $X\colon
  P\to {\bf Alg}_k$ is a functor into the category of $k$-algebras, then
  it will be referred as a right {\em $P$-algebra}.  
The category of right
  $P$-algebras and their morphisms will be denoted by ${\bf Alg}_P$.

For a topological space $X$ and a covering $\C{O}$ of $X$, we  say that
$\C{O}$ is stable under finite intersections if for any finite
collection $O_1,\ldots,O_n$ of sets from $\C{O}$ there exists a subset
$\C{O}'\subseteq \C{O}$ such that
\begin{equation}
 \bigcap_{i=1}^n O_i = \bigcup_{O'\in\C{O}'} O'.
\end{equation}

\begin{lemma}\label{LimitForm}
  Let $F$ be a sheaf of algebras on a topological space $X$.  Then for
  any open subset $U\subseteq X$ and any open covering $\C{U}$ of $U$
  that is stable under finite intersections, the canonical morphism
  $F(U)\to \lim_{V\in\C{U}} F(V)$ is an isomorphism.
\end{lemma}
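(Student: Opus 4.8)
The plan is to reduce the general statement to the defining sheaf axiom by exhibiting the limit $\lim_{V\in\C{U}}F(V)$ as a suitable equalizer. First I would recall that, since $\C{U}$ covers $U$, the sheaf axiom for $F$ on the open set $U$ with cover $\C{U}$ says precisely that
\begin{equation*}
F(U)\longrightarrow\prod_{V\in\C{U}}F(V)\rightrightarrows\prod_{V,W\in\C{U}}F(V\cap W)
\end{equation*}
is an equalizer, the two parallel arrows being induced by the restrictions $F(V)\to F(V\cap W)$ and $F(W)\to F(V\cap W)$. So the morphism $F(U)\to\lim_{V\in\C{U}}F(V)$ in question factors through this equalizer object, and it suffices to show that the equalizer object $E:=\{(s_V)_V\mid s_V|_{V\cap W}=s_W|_{V\cap W}\ \forall V,W\}$ is canonically isomorphic to $\lim_{V\in\C{U}}F(V)$, where the latter limit is taken over the diagram on the poset $\C{U}$ ordered by inclusion (with the restriction maps as structure morphisms).

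The key step is the use of the hypothesis that $\C{U}$ is stable under finite intersections. An element of $\lim_{V\in\C{U}}F(V)$ is a compatible family $(t_V)_{V\in\C{U}}$ with $t_V|_{V'}=t_{V'}$ whenever $V'\subseteq V$ in $\C{U}$; restricting attention to the full subposet spanned by the chosen cover gives a forgetful map $\lim_{V\in\C{U}}F(V)\to E$. Conversely, given $(s_V)_V\in E$, I would produce a compatible family over all of $\C{U}$: for $V'\subseteq V$ in $\C{U}$ I must check $s_V|_{V'}=s_{V'}$. Here stability under finite intersections is what makes the equalizer condition (phrased only with pairwise intersections $V\cap W$, which themselves need not lie in $\C{U}$ but are unions of members of $\C{U}$) strong enough: writing $V\cap W=\bigcup_{O'\in\C{O}'}O'$ with $O'\in\C{U}$ and applying the sheaf axiom on $V\cap W$ with the cover $\C{O}'$ lets me compare sections section-locally and conclude $s_V|_{V\cap W}=s_W|_{V\cap W}$ actually refines to agreement on each $O'\subseteq V\cap W$, and in particular (taking $W=V'\subseteq V$, so $V\cap W=V'=\bigcup\C{O}'$) that $s_V|_{V'}=s_{V'}$. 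This shows the two maps between $E$ and $\lim_{V\in\C{U}}F(V)$ are mutually inverse.

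Finally I would assemble the diagram: the canonical map $F(U)\to\lim_{V\in\C{U}}F(V)$ is the composite of the isomorphism $F(U)\xrightarrow{\ \sim\ }E$ (the sheaf axiom) with the isomorphism $E\xrightarrow{\ \sim\ }\lim_{V\in\C{U}}F(V)$ just constructed, hence is itself an isomorphism of algebras, the algebra structure being inherited coordinatewise. The main obstacle I anticipate is precisely the bookkeeping in the previous paragraph: making sure that the pairwise-intersection equalizer condition, together with closure under finite intersections, genuinely upgrades to the full compatibility required by the limit over the poset $\C{U}$ --- essentially an induction or a direct union argument showing that agreement on a cover of $V\cap W$ by members of $\C{U}$ plus the sheaf property on $V\cap W$ forces agreement on $V\cap W$ and on all smaller members of $\C{U}$. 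Everything else (functoriality of restriction, the algebra structure passing through limits, the universal property identifying $E$ with the equalizer) is routine.
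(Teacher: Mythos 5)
Your proposal is correct and takes essentially the same approach as the paper: both reduce the claim to the two sheaf axioms by identifying $\lim_{V\in\C{U}}F(V)$ with the equalizer of $\prod_{V}F(V)\rightrightarrows\prod_{V,W}F(V\cap W)$, the only nontrivial point being that stability under finite intersections, combined with the separatedness axiom applied to $V\cap W$ covered by members of $\C{U}$, bridges the gap between poset compatibility and pairwise-intersection compatibility. The one thing to tidy up is that the direction you single out as needing this hypothesis --- checking $s_V|_{V'}=s_{V'}$ for $V'\subseteq V$ with $(s_V)_V$ in your $E$ --- is already immediate from the equalizer condition with $W=V'$ (since $V\cap V'=V'$); the hypothesis is genuinely used only for the reverse inclusion of the limit into $E$, where $V\cap W$ need not belong to $\C{U}$, and the covering argument you sketch is exactly the right tool there, just as in the paper's proof.
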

\begin{proof}
  First, we recall that a pre-sheaf $F$ is a sheaf  on a topological
  space $X$ if and only if given an open subset $U$ and a covering
  $\C{U}$ of $U$ we have:
  \begin{enumerate}
  \item  for any $s\in F(U)$, if
 ${\rm Res}^U_V(s) = 0$ for all $V\in\C{U}$, then
$s=0$;
  \item for a collection of elements $\{s_V\in F(V)\}_{V\in\C{U}}$
    indexed by $\C{U}$ and  satisfying ${\rm Res}^V_{V\cap W}(s_V) = {\rm
      Res}^W_{V\cap W}(s_W)$, there exists $s\in F(U)$ with ${\rm
      Res}^U_V(s) = s_V$ for any $V\in\C{U}$.
  \end{enumerate}
  Now let $F$ be a sheaf and $\C{U}$ be an open covering of an
  open subset $U$. Assume that the covering $\C{U}$ is stable under finite intersections.  
Next, recall
  that
  \begin{equation}\label{LimitOnACovering}
    \lim_{V\in\C{U}} F(V) = \{(f_V)_{V\in\C{U}}\;|\;f_V\in F(V)\text{
    and } f_W = {\rm Res}^V_W(f_V) \text{ for any } V\supseteq
  W\in\C{U}\}.
  \end{equation}
  The canonical morphism $F(U)\to \lim_{V\in\C{U}} F(U)$ sends each
  element $s\in F(U)$ to the sequence $({\rm
    Res}^U_V(s))_{V\in\C{U}}$.  The condition $(1)$ implies that the
  canonical morphism is injective.  Every element $(f_V)_{V\in\C{U}}$
  of $\lim_{V\in\C{U}} F(V)$ satisfies ${\rm Res}^V_{V\cap W}(f_V) =
  {\rm Res}^W_{V\cap W}(f_W)$ because of the fact that $\C{U}$ is
  stable under finite intersections and because $F$ is a sheaf.  Then
  the condition $(2)$ implies that the canonical morphism is an
  epimorphism.
\end{proof}

The following result is well known for sheaves of modules.
See~\cite[Prop.~6.6]{BrunBrunsRomer:CohomologyOfPosets} for a proof.
Here we prove an analogous result for sheaves of algebras.
\begin{thm}\label{PAlgebras}
  Let $(P,\leq)$ be a poset.  Then the category of sheaves of
  $k$-algebras on $P$ with the Alexandrov topology induced by the poset structure is equivalent to the
  category $P$-algebras.
\end{thm}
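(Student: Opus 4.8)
The plan is to construct a pair of functors between the category $\mathbf{Sh}$ of sheaves of $k$-algebras on $(P,\leq)$ and the category $\mathbf{Alg}_P$ of $P$-algebras, and show they are mutually inverse up to natural isomorphism. In one direction, given a sheaf $F$, we define a $P$-algebra by setting $X_p := F(\upset p)$ for each $p\in P$ (using that $\upset p$ is the smallest open set containing $p$), and for $p\leq q$ we take $T_{qp}\colon X_p = F(\upset p)\to F(\upset q) = X_q$ to be the restriction morphism ${\rm Res}^{\upset p}_{\upset q}$, which makes sense because $q\in\upset p$ forces $\upset q\subseteq\upset p$. Functoriality of restriction gives $T_{pp}=\id$ and $T_{rq}\circ T_{qp}=T_{rp}$, so this is indeed an object of $\mathbf{Alg}_P$; a morphism of sheaves restricts on each $\upset p$ to give a morphism of $P$-algebras, so this is a functor $\Phi\colon\mathbf{Sh}\to\mathbf{Alg}_P$.

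In the other direction, given a $P$-algebra $X = (\{X_p\},\{T_{qp}\})$, I would define a presheaf by the limit formula: for an open (i.e.\ upper) set $U\subseteq P$, set $\Gamma(U) := \lim_{p\in U} X_p$, where the limit is taken over $U$ regarded as a subcategory of $P$ and the transition maps are the $T_{qp}$. Concretely $\Gamma(U) = \{(x_p)_{p\in U}\mid x_p\in X_p,\ T_{qp}(x_p)=x_q \text{ whenever } p\leq q \text{ in } U\}$, with componentwise algebra operations, and the restriction map $\Gamma(U)\to\Gamma(V)$ for $V\subseteq U$ is the obvious projection. The key verification is that $\Gamma$ is actually a sheaf: given an open cover $\{U_i\}$ of $U$, an element of $\Gamma(U)$ is a compatible family of components over all of $U = \bigcup U_i$, and such a family is the same thing as a collection of elements of $\Gamma(U_i)$ agreeing on overlaps, since $U_i\cap U_j$ is again open and the compatibility conditions are local to pairs of comparable points — this is essentially the observation that limits glue. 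This defines a functor $\Psi\colon\mathbf{Alg}_P\to\mathbf{Sh}$.

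It then remains to exhibit natural isomorphisms $\Phi\Psi\cong\id$ and $\Psi\Phi\cong\id$. For $\Phi\Psi$: starting from a $P$-algebra $X$, we have $(\Phi\Psi X)_p = \Gamma(\upset p) = \lim_{q\in\upset p} X_q$, and since $p$ is the least element of $\upset p$, the cone component at $p$ gives a map $\lim_{q\in\upset p}X_q \to X_p$ which is an isomorphism (the limit over a category with an initial object is computed by that object); one checks this identification is compatible with the transition maps. For $\Psi\Phi$: starting from a sheaf $F$, we have $(\Psi\Phi F)(U) = \lim_{p\in U} F(\upset p)$, and since $\{\upset p\}_{p\in U}$ is an open cover of $U$ that is stable under finite intersections (indeed $\upset p\cap\upset q = \bigcup_{r\geq p,\,r\geq q}\upset r$), Lemma~\ref{LimitForm} gives exactly that the canonical map $F(U)\to\lim_{p\in U}F(\upset p)$ is an isomorphism. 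Both isomorphisms are visibly natural in their arguments, and compatibility with the algebra structure is automatic since every map involved is an algebra map.

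The main obstacle, and the only place where real content beyond bookkeeping enters, is the sheaf axiom for $\Psi(X)$ together with the appeal to Lemma~\ref{LimitForm}; everything else is a routine unwinding of the fact that an Alexandrov topology has minimal open neighborhoods $\upset p$, so that sheaves are determined by their sections on these basic opens via a limit. One should be slightly careful that $P$ need not be finite here (unlike in the pattern discussion), but the argument only uses that $\upset p$ is the smallest open containing $p$ and that the cover $\{\upset p\}_{p\in U}$ is stable under finite intersections, both of which hold for arbitrary posets, so Lemma~\ref{LimitForm} applies verbatim.
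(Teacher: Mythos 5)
Your proposal is correct and follows essentially the same route as the paper: the functor $F\mapsto\{F(\upset p)\}_p$ in one direction, the limit presheaf $U\mapsto\lim_{p\in U}X_p$ in the other, the sheaf axiom checked componentwise, $\Phi\Psi\cong\id$ via the initial object $p$ of $\upset p$, and $\Psi\Phi\cong\id$ via Lemma~\ref{LimitForm} applied to the cover $\{\upset p\}_{p\in U}$, which is stable under finite intersections. The only cosmetic difference is that you justify that stability explicitly ($\upset p\cap\upset q=\bigcup_{r\geq p,\,r\geq q}\upset r$), which the paper leaves implicit.
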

\begin{proof}
  Consider an arbitrary sheaf of algebras $F\in \bs{Sh}(P)$.  Define a
  collection of $k$-algebras $\C{F}:=\{F_p\}_{p\in P}$ indexed by elements of
  $P$ by letting $F_p:= F(\upset p)$ for any $p\in P$.  Then
 $\upset p\supseteq \upset q$ for any
  $p\leq q$. Therefore, since $F$ is a sheaf, we have
   morphisms of $k$-algebras $T^F_{qp}\colon F_p\to F_q$
  for any $p\leq q$ that satisfy (i) $T^F_{pp}= \id_{F_p}$ for any
  $p\in P$, and (ii) $T^F_{rq}\circ T^F_{qp} = T^F_{rp}$ for any $p\leq
  q\leq r$ in $P$.  In other words,
  $\{F_p\}_{p\in P}$ is a right $P$-algebra.  Also, given any morphism
  of sheaves $\phi\colon F\to G$, we have morphisms of
  algebras $\{\phi_{\upset\; p}=:\varphi_p\}_{p\in P}$ that fit into a commutative diagram of
  the form
  \begin{equation}
\xymatrix{
    F_p \ar[d]_{\varphi_p} \ar[r]^{T^F_{qp}} & F_q \ar[d]^{\varphi_q}\\
    G_p \ar[r]_{T^G_{qp}} &\,G_q\,.}
  \end{equation}
  This means that we have a functor of the form $\Phi\colon \bs{Sh}(P)\to
  {\bf Alg}_P$.

Conversely, assume that we have such a collection of
  algebras $\C{F} = \{F_p\}_{p\in P}$ with structure morphisms
  $T_{qp}\colon F_p\to F_q$ for any $p\leq q$ satisfying the
  conditions (i) and (ii) described above.  We let $\Upsilon(\C{F})(V)
  := \lim_{v\in V} F_v$ viewing $P$ as a category as  in
  \eqref{PosetCategory}.  Now, for any inclusion of open sets
  $V\subseteq W$, we have a morphism of algebras
  ${\rm Res}^W_V(\Upsilon(\C{F}))\colon \Upsilon(\C{F})(W)\to
  \Upsilon(\C{F})(V)$. By definition, it is the canonical
  morphism of limits
  $
 \lim_{w\in W} F_w \to \lim_{v\in V} F_v.
$
 Thus we see that
  $\Upsilon(\C{F})$ is a pre-sheaf.

To show that
  $\Upsilon(\C{F})$ is a sheaf, we fix an open subset
  $U\subseteq P$ and an open covering $\C{U}$ of $U$. Using the
  description analogous to Equation~\eqref{LimitOnACovering}, one can
  see that for any 
\begin{equation}
f^U:=(f_u)_{u\in U}\in \Upsilon(\C{F})(U) = \lim_{u\in
    U} F_u
\end{equation}
 we have ${\rm Res}^U_V(f^U) = 0$ for any $V\in\C{U}$ if and
  only if $f_u=0$ for any $u\in U$.  Moreover, assume that we have a
  collection of elements $f^V:= (f^V_v)_{v\in V}\in
  \Upsilon(\C{F})(V)$ indexed by $V\in\C{U}$ satisfying ${\rm
    Res}^V_{V\cap W}\Upsilon(\C{F})(f^V) = {\rm Res}^W_{V\cap
    W}\Upsilon(\C{F})(f^W)$ for any $V,W\in\C{U}$.  This means $f^V_z
  = f^W_z$ for any $z\in V\cap W$.  Notice also that
  $ {\rm Res}^V_Z((f^V_v)_{v\in V}) = (f^V_z)_{z\in Z}\in
  \Upsilon(\C{F})(Z) $ for any open subset $Z$ of~$V$.  Therefore,
  we can patch $\{(f^V_v)_{v\in V}\}_{V\in\C{U}}$ by forgetting the superscripts indicating which
  open subset we consider and letting
  $f=(f_u)_{u\in U}$.  Hence we  conclude that $\Upsilon(\C{F})$
   is a sheaf.

Next, to show that $\Upsilon$ is compatible
  with the morphisms, for an arbitrary morphism\linebreak
 \mbox{$ \varphi \colon \{F_p\}_{p\in
    P}\to \{G_p\}_{p\in P}$} of right $P$-algebras and for any open
  subset $V\subseteq P$, we define
  \begin{equation}
 \Upsilon(\varphi)(V):= \lim_{v\in V}
  \varphi_v\colon \Upsilon(\{F_p\}_{p\in P})(V)\longrightarrow
  \Upsilon(\{G_p\}_{p\in P})(V).
\end{equation}
 Thus we obtain a functor from the category of $P$-algebras
  into the category of sheaves of $k$-algebras on~$P$. 

Finally, we verify that the functors $\Phi$ and $\Upsilon$ establish an equivalence of
categories. One can
  see that $\Upsilon(\Phi(F))(V) = \lim_{v\in V} F(\upset v)$.  Since
  $\{\upset v|\ v\in V\}$ is an open cover of $V$ that is stable
under finite intersections and  $F$ is a
  sheaf, it follows from Lemma~\ref{LimitForm} (cf.~\cite[Sect.~2.2,
  p.~85]{KashiwaraSchapira:SheavesOnManifolds})
that $F(V) \cong \lim_{v\in V} F(\upset v)$.
Hence we conclude that the endofunctors
  $\id$ and $\Upsilon\circ\Phi$ are isomorphic. 
On the other hand, since any $p\in P$ is
the unique minimal element of the open set
  $\upset p$, we obtain $F_p\cong\lim_{q\in\upset\;p}F_q$, so
  that $\Phi\circ\Upsilon$ is isomorphic to the identity functor.  
\end{proof}

\section{Classification of finite coverings via the universal partition
space $\pinf$}\label{Coverings}

The aim of this section is to establish an equivalence between
the category of finite coverings of algebras and an appropriate
category of finitely-supported flabby sheaves of algebras. To this end,
we first define a number of different categories of coverings and
 sheaves.
Then we explore their interrelations to assemble a path of functors
yielding the desired equivalence of categories.

\vspace{1.5mm}
\subsection{Categories of coverings}
\label{CategoryOfCoverings}~

\noindent
Let $X$ be a topological space and $\C{C}$ be a collection of subsets
of $X$ that cover $X$, i.e., $\bigcup_{U\in\C{C}} U = X$.  We allow
$\emptyset\in\C{C}$.  Recall that such a set is called {\em a
  covering} of $X$.  A covering $\C{C}$ is called finite if the set
$\C{C}$ is finite.  A covering $\C{C}$ of a topological space $X$ is
called {\em closed} (resp. {\em open}) if $\C{C}$ consists of closed
(resp. open) subsets of $X$.  Let us now 
 consider the category of pairs of
the form $(X,\C{C})$ where $X$ is a topological space and $\C{C}$ is a
closed (or open) covering.  A morphism $f\colon (X,\C{C})\to
(X',\C{C}')$ is a continuous map of topological spaces $f\colon X\to
X'$ such that for any $C\in\C{C}$ there exists $C'\in\C{C}'$ with the
property that $C\subseteq f^{-1}(C')$.  In the spirit of the Gelfand
transform, we are going to dualize this category to the category of
algebras.

Let $\Pi=\{\pi_i\colon A\to A_i\}_i$ be a finite set of epimorphisms
of algebras.  We allow the case $A_i=\bs{0}$ for some $i$.  Denote by
$\Lambda$ the lattice of ideals generated by $\ker\pi_i$, where $\cap$
and $+$ denote the join and meet operations respectively.  Recall
from \cite{HKMZ:PiecewisePrincipalComoduleAlgebras} that the set $\Pi$
is called a {\em covering} if the lattice $\Lambda$ is distributive
and $\bigcap_i \ker(\pi_i)=\bs{0}$.  Finally, an ordered family
$\ord{\Pi} = (\pi_i\colon A\rightarrow A_i)_i$ is called an {\em
  ordered covering} if the set $\kappa(\ord{\Pi}):= \{\pi_i\colon
A\rightarrow A_i\}_i$ is a covering.  In such an ordered sequence
$(\pi_i\colon A\to A_i)_i$ we allow repetitions.

In~\cite{HKMZ:PiecewisePrincipalComoduleAlgebras},
for each natural number $N$,
the authors defined
a category $\ncov{N}$  whose objects
are pairs $(A;\pi_0,\ldots,\pi_N)$, where $A$ is a unital algebra
and the ordered sequence $(\pi_0,\ldots,\pi_N)$ is an ordered covering
of~$A$. (Note that herein we begin labelling covering elements from 
$0$ rather than from $1$ as 
in~\cite{HKMZ:PiecewisePrincipalComoduleAlgebras}.)   
A morphism between two objects $f\colon
(A;\pi_0,\ldots,\pi_N)\to(A';\pi_0',\ldots,\pi_N')$ is a morphism of
algebras $f\colon A\to A'$ such that $f(\ker(\pi_i))\subseteq
\ker(\pi_i')$ or, equivalently, such that $\ker(\pi_i)\subseteq
f^{-1}(\ker(\pi_i'))$, for any $i=0,\ldots,N$.  This category is called
{\em the category of ordered $(N+1)$-coverings of algebras}.

For any natural number $N$, there is a functor $e_N\colon
\ncov{N}\to\ncov{N+1}$  defined on
the set of objects as
$e_N(A;\pi_0,\ldots,\pi_N):= (A;\pi_0,\ldots,\pi_N, A\to \bs{0})$ for 
all $(A;\pi_0,\ldots,\pi_N)\in Ob(\ncov{N})$, and 
as  identity on the sets of morphisms. Thus $e_N$ is a faithful
functor. It is also full because, for any  $(A,\ord{\Pi})$ and
$(A',\ord{\Pi}')$ in $Ob(\ncov{N})$,  we have
\begin{equation}
 \Hom_{\ncov{N+1}}(e_N(A,\ord{\Pi}),e_N(A',\ord{\Pi}')) =
\Hom_{\ncov{N}}((A,\ord{\Pi}), (A',\ord{\Pi}')).
\end{equation}

Our next step is to introduce 
the category $\ocovf$ of pairs of the form
$(A,\ord{\Pi})$ where $A$ is again a unital algebra but
$\ord{\Pi}$ is 
an infinite (rather than finite) 
sequence of epimorphisms $\pi_i\colon A\to
A_i$, $i\in\B{N}$, such that: (i) all but
finitely many of these epimorphisms have zero codomain and (ii) the
underlying set $\kappa(\ord{\Pi})$ of epimorphisms is a covering 
of~$A$.  A morphism $f\colon (A; \pi_0, \pi_1, \ldots)\to (A'; \pi_0',
\pi_1', \ldots)$ is a morphism of algebras $f\colon A\to A'$ with the
property that $\ker(\pi_i)\subseteq f^{-1}(\ker(\pi_i'))$ for any
$i\in\B{N}$. Alternatively, we can define $\ocovf$ as a colimit:
\begin{dfn}\label{Def:OrderedCoverings}
 The category $\ocovf:={\rm colim}_{N\in\mN} \ncov{N}$ is called
  the category of {\em finite ordered coverings} of algebras.
\end{dfn}

 Next,  recall from the
beginning of this section that, in the category of
topological spaces together with a prescribed finite covering,
  a covering is a collection
of sets devoid of an ordering on the covering sets.  Thus, it is
necessary for us to replace the ordered sequences of epimorphisms in
the objects of the category $\ocovf$, and work with {\em finite sets}
of epimorphisms of algebras.
\begin{dfn}\label{Def:Coverings}
  Let $\cov$ be a category whose objects are pairs $(A,\Pi)$, where
  $A$ is a unital algebra and $\Pi$ is a finite {\em set} of unital
  algebra epimorphisms that is a covering of the algebra~$A$.  A
  morphism $f\colon (A,\Pi)\to (A',\Pi')$ in this category is a
  morphism of algebras $f\colon A\to A'$ satisfying the condition that
  for any epimorphism $\pi'_i\colon A'\to A'_i$ in the covering $\Pi'$
  there exists an epimorphism $\pi_j\colon A\to A_j$ in the covering
  $\Pi$ such that $\ker(\pi_j)\subseteq f^{-1}(\ker(\pi'_i))$.  This
  category will be called {\em the category of finite coverings of
    algebras}.
\end{dfn}
\noindent
 If $f\colon (A,\Pi)\to (A',\Pi')$ is a
morphism in $\cov$, we will say that $f$ is implemented by the morphism
of algebras $f\colon A\to A'$.  Note that the matching of the
epimorphisms, or rather the kernels, is not part of the datum defining
a morphism.   

We also need the
following auxiliary category.
\begin{dfn}
   Category $\covaux$ is a category whose
  objects are the same as the objects of $\ocovf$. A morphism
  $f\colon (A,\ord{\Pi})\to (A',\ord{\Pi}')$ in $\covaux$
is a morphism of algebras
  $f\colon A\to A'$  satisfying the property that for every
  $\pi_j'$ appearing in the sequence $\ord{\Pi}'$ there exists an
  epimorphism $\pi_i$ appearing in the ordered sequence $\ord{\Pi}$
  such that $\ker(\pi_i)\subseteq f^{-1}(\ker(\pi_j'))$.
\end{dfn}
\noindent
As before, the matching of the epimorphisms is not part of the datum
defining a morphism.

Now we want to prove that the categories $\covaux$ and $\cov$ are
equivalent.
Recall first that a
 functor $F\colon \C{C}\to\C{D}$ is called {\em essentially
  surjective} if for every $X\in Ob(\C{D})$ there exists an object
$C_X\in Ob(\C{C})$ and an isomorphism $\omega_X\colon F(C_X)\to X$ in
$\C{D}$.
\begin{thm}{\cite[IV.~4 Thm.1]{MacLane:Categories}}\label{MainLemma}
  Let $F\colon \C{C}\to \C{D}$ be a functor that is fully faithful
  and essentially surjective.  Then $F$ is an equivalence of
  categories.
\end{thm}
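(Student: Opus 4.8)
The plan is to produce an explicit quasi-inverse $G\colon\C{D}\to\C{C}$ together with natural isomorphisms $FG\cong\id_{\C{D}}$ and $\id_{\C{C}}\cong GF$; exhibiting such a $G$ is precisely what it means for $F$ to be an equivalence. First I would invoke essential surjectivity (and the axiom of choice) to select, for every object $X$ of $\C{D}$, an object $G(X)$ of $\C{C}$ together with an isomorphism $\omega_X\colon F(G(X))\to X$. To define $G$ on a morphism $g\colon X\to Y$, I would observe that $\omega_Y^{-1}\circ g\circ\omega_X$ is a morphism $F(G(X))\to F(G(Y))$, so by full faithfulness of $F$ there is a \emph{unique} morphism $G(g)\colon G(X)\to G(Y)$ with
\begin{equation}\label{eq:Gdefn}
  F(G(g)) = \omega_Y^{-1}\circ g\circ\omega_X .
\end{equation}
Functoriality of $G$ would then follow from \eqref{eq:Gdefn} by applying the faithful functor $F$: one computes $F(G(\id_X)) = \id_{F(G(X))} = F(\id_{G(X)})$ and $F(G(g'\circ g)) = F(G(g'))\circ F(G(g)) = F(G(g')\circ G(g))$, and faithfulness forces $G(\id_X) = \id_{G(X)}$ and $G(g'\circ g) = G(g')\circ G(g)$.

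Next I would note that $\omega := (\omega_X)_X$ is already a natural isomorphism $FG\Rightarrow\id_{\C{D}}$: each component is invertible by construction, and the naturality square at $g\colon X\to Y$ is exactly equation \eqref{eq:Gdefn}. For the other natural isomorphism, for each object $C$ of $\C{C}$ I would define $\eta_C\colon C\to G(F(C))$ to be the unique morphism with $F(\eta_C) = \omega_{F(C)}^{-1}$, which exists by full faithfulness; since a fully faithful functor reflects isomorphisms and $\omega_{F(C)}^{-1}$ is invertible, $\eta_C$ is an isomorphism. Naturality of $\eta$ at $h\colon C\to C'$ --- i.e.\ $G(F(h))\circ\eta_C = \eta_{C'}\circ h$ --- I would again verify by applying $F$ and using \eqref{eq:Gdefn} with $g = F(h)$: the image of the left-hand side collapses to $\omega_{F(C')}^{-1}\circ F(h) = F(\eta_{C'})\circ F(h) = F(\eta_{C'}\circ h)$, and faithfulness gives the desired identity in $\C{C}$. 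With $G$, $\omega$, and $\eta$ in hand, $F$ and $G$ are mutually quasi-inverse, so $F$ is an equivalence of categories.

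I do not expect a genuine obstacle: the whole argument is bookkeeping in which every functoriality identity and every naturality square is verified indirectly, by applying the faithful functor $F$ and reducing the claim to an identity in $\C{D}$ that unwinds directly from \eqref{eq:Gdefn}. The step that most rewards care is the construction of the unit $\eta$, which is the only place where we must solve for a morphism of $\C{C}$ from a prescribed image in $\C{D}$ (using the surjectivity-on-hom-sets half of full faithfulness, together with the consequence that $F$ reflects isomorphisms). No hypothesis beyond full faithfulness, essential surjectivity, and --- for the initial choices of $G(X)$ and $\omega_X$ --- the axiom of choice enters the proof.
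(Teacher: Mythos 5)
Your proof is correct and is exactly the standard argument: the paper does not prove this statement itself but cites it from Mac~Lane (IV.4, Theorem~1), and your construction of the quasi-inverse $G$ via essential surjectivity plus choice, with $G$ defined on morphisms by solving $F(G(g))=\omega_Y^{-1}\circ g\circ\omega_X$ through full faithfulness, is the proof given there. All the verifications (functoriality of $G$, naturality of $\omega$ and $\eta$, and that $F$ reflects isomorphisms) are carried out correctly.
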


\begin{lemma}\label{Replacement}
 For every object $(A;\pi_0,\pi_1,\ldots)$ and morphism $f\colon
  (A,\ord{\Pi})\to (A',\ord{\Pi}')$ in the category $\covaux$ 
consider the assignment
  \[ \G{Z}(A;\pi_0,\pi_1,\ldots) := (A; \{\pi_i|\ i\in\B{N}\})\in 
Ob(\cov)
  \quad\text{ and }\quad \G{Z}(f) := f\in\text{\it Mor}(\cov).
  \] 
  The assignment defines a functor $\G{Z}\colon \covaux\to\cov$
  establishing the equivalence of categories.
\end{lemma}
\begin{proof}
  One can see that
  \begin{equation}
  \Hom_{\covaux}((A,\ord{\Pi}),\ (B,\ord{\Theta}))
  = \Hom_{\cov}((A,\kappa(\ord{\Pi})),\ (B,\kappa(\ord{\Theta}))).
  \end{equation}
  This implies that $\G{Z}$ is fully faithful,
  and that it makes sense for the
  functor $\G{Z}$ to act as identity on the set of morphisms.
  Given an object $(A,\Pi)$ in $\cov$, one can choose an ordering on
  the finite set $\Pi$ and obtain an ordered sequence of epimorphisms
\begin{equation}
(\pi_0\colon A\to A_0, \pi_1\colon A\to A_1,\ldots,\pi_N\colon A\to
  A_N),
\end{equation}
 where $N=|\Pi|-1$.  We can pad this sequence with $A\to \bs{0}$
  to get an infinite sequence $\ord{\Pi}$ of epimorphisms where only
  finitely many epimorphisms are non-trivial.  This infinite sequence
  has the property that the corresponding finite set
  $\kappa(\ord{\Pi})$ of epimorphisms is the set
  $\Pi\cup\{A\to\bs{0}\}$.  Since the identity morphism $\id_A\colon
  A\to A$ implements an isomorphism
\begin{equation}
(A,\Pi\cup\{A\to\bs{0}\})\rightarrow
  (A,\Pi)
\end{equation}
 in $\cov$, we conclude that $\G{Z}$ is essentially
  surjective. Now the result follows from Theorem~\ref{MainLemma}.
\end{proof}

The
category $\covaux$ sits in between the category $\ocovf$ of ordered
coverings and the category $\cov$ of coverings:
\begin{equation}
 \ocovf \hookrightarrow \covaux \xra{\simeq} \cov.
\end{equation}
The definitions of morphisms in the categories $\covaux$ and $\cov$
coincide even though the classes of objects are different.  On the other
 hand, the categories $\ocovf$ and $\covaux$ share the same objects, but
there are more morphisms in $\covaux$ than in $\ocovf$:
\begin{equation}
  \text{Hom}_{\ocovf}((A,\ord{\Pi}),(B,\ord{\Pi}'))\subseteq
\text{Hom}_{\covaux}((A,\ord{\Pi}),(B,\ord{\Pi}')).
\end{equation}
Explicitly, one can describe
$\text{Hom}_{\covaux}((A,\ord{\Pi}),(B,\ord{\Pi}'))$ as the set of
morphisms of algebras
{$f\colon A\rightarrow B$} for which there exists
a sequence of epimorphisms $\ord{\Pi}''$ obtained from $\ord{\Pi}$ by
permutations and insertions of already existing epimorphisms, and such
that $f$ is a morphism in
$\text{Hom}_{\ocovf}((A,\ord{\Pi}'')$, $(B,\ord{\Pi}'))$.
This can be elegantly expressed  by
introducing another auxiliary category $\auxp$ such that $\covaux$ comes
out as the quotient of $\auxp$ by an equivalence relation on the
morphisms (c.f. Definition~\ref{Aux2} and Lemma~\ref{eqauxp} below).

The reason why we prefer working with ordered sequences of
epimorphisms in $\covaux$ rather then the sets of epimorphisms in $\cov$
is that we want to interpret coverings in the language of sheaves.
Working with sheaves inevitably introduces order on the set of
epimorphisms because of the particular nature of morphism in the
category of sheaves (c.f.~Lemma~\ref{Equivalence}).  Fortunately, by
Lemma~\ref{Replacement}, our auxiliary category $\covaux$, where
the objects are based on ordered sequences, is equivalent to $\cov$,
the category of finite coverings of algebras where the objects are
based on finite sets of epimorphisms.

Let $\alpha:\mN\rightarrow\mN$ be a tame surjection from the monoid
$\mathcal{M}$ (Definition~\ref{FiniteFibers}).  Any such $\alpha$
gives rise to an endofunctor $\check{\alpha}\colon
\ocovf\rightarrow\ocovf$ defined on objects by
\begin{equation}
  \check{\alpha}(A,(\pi_i)_i):=(A,(\pi_{\alpha(i)})_i),
\end{equation}
and by identity on the morphisms.
\begin{dfn}\label{Aux2}
 The category $\auxp$ is a category whose objects are the same as in
  $\ocovf$ and $\covaux$, and whose  morphisms  are pairs 
  $(f,\alpha):(A,\ord{\Pi})\rightarrow(A',\ord{\Pi}')$ such that
  $\alpha\in\mathcal{M}$ and
\begin{equation*}
  f:\check{\alpha}(A,\ord{\Pi})\longrightarrow (A',\ord{\Pi}')
\end{equation*}
is a morphism in $\ocovf$. The identity morphisms are simply
$(\id_A,\id_\mN)$, and the composition of morphisms is defined as
\begin{equation*}
(g,\beta)\circ (f,\alpha)=(g\circ(\check{\beta}f),\alpha\circ\beta).
\end{equation*}
\end{dfn}
\noindent Note that we have
$(\beta\circ\alpha)^{\textstyle\check{}}=\check{\alpha}\check{\beta}$.

We define an equivalence relation on $\auxp$ as follows.  We say that
two morphisms $(f,\alpha)$, $(f',\alpha')$ in
$\text{Hom}_{\auxp}((A,\ord{\Pi}),(A',\ord{\Pi}'))$ are equivalent
(here denoted by $(f,\alpha)\sim(f',\alpha')$) if $f=f'$ as morphisms
of algebras. By \cite[Proposition~II.8.1]{MacLane:Categories}, we know
the quotient category $\auxp/\!\!\sim$ exists.
Moreover, it is easy to see that the relation $\sim$
preserves the compositions of morphisms.  Hence, by the proof of
\cite[Proposition~II.8.1]{MacLane:Categories}, we do not need to extend
the relation $\sim$ to form a quotient category.
We are now ready  for:
\begin{lemma}\label{eqauxp}
  The category $\covaux$ and the quotient category $\auxp/\!\!\sim$ are
  isomorphic.
\end{lemma}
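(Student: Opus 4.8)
The plan is to build an explicit isomorphism of categories $\G{W}\colon\auxp/\!\!\sim\,\to\,\covaux$. Since $\auxp$, $\covaux$ and $\ocovf$ all share the same class of objects, I would let $\G{W}$ be the identity on objects; on morphisms I would send the $\sim$-class of a pair $(f,\alpha)\colon(A,\ord{\Pi})\to(A',\ord{\Pi}')$ to the underlying algebra homomorphism $f\colon A\to A'$. Two points have to be checked for this to make sense. First, $f$ really is a morphism in $\covaux$: by the definition of a morphism of $\auxp$, the map $f\colon\check{\alpha}(A,\ord{\Pi})\to(A',\ord{\Pi}')$ lies in $\ocovf$, that is, $\ker(\pi_{\alpha(j)})\subseteq f^{-1}(\ker(\pi'_j))$ for every $j\in\mN$; hence for each $\pi'_j$ appearing in $\ord{\Pi}'$ the epimorphism $\pi_{\alpha(j)}$ of $\ord{\Pi}$ witnesses the defining condition of a morphism in $\covaux$. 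Second, the value $f$ depends only on the $\sim$-class of $(f,\alpha)$ — this is exactly how $\sim$ was defined.

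Functoriality of $\G{W}$ is then routine. It sends $(\id_A,\id_{\mN})$ to $\id_A$, the identity in $\covaux$. For composition, recall that $\check{\beta}$ acts as the identity on morphisms, so the underlying algebra map of $(g,\beta)\circ(f,\alpha)=(g\circ\check{\beta}f,\alpha\circ\beta)$ is just $g\circ f$; thus $\G{W}$ preserves composition (using also that $\sim$ is compatible with composition, so that the composite in $\auxp/\!\!\sim$ is the class of the composite in $\auxp$). Faithfulness is immediate: if $\G{W}\bigl([(f,\alpha)]\bigr)=\G{W}\bigl([(f',\alpha')]\bigr)$ then $f=f'$, hence $(f,\alpha)\sim(f',\alpha')$. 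Since $\G{W}$ is moreover bijective (indeed the identity) on objects, the entire lemma reduces to showing that $\G{W}$ is \emph{full}.

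For fullness I would start from a morphism $f\colon(A,\ord{\Pi})\to(A',\ord{\Pi}')$ in $\covaux$ and manufacture a tame surjection $\alpha\in\C{M}$ for which $(f,\alpha)$ is a morphism of $\auxp$ — equivalently, for which $\ker(\pi_{\alpha(j)})\subseteq f^{-1}(\ker(\pi'_j))$ holds for all $j$ — so that $\G{W}\bigl([(f,\alpha)]\bigr)=f$. Fix $N$ large enough that $\pi'_j$ is the zero epimorphism $A'\to\bs{0}$ for every $j>N$. For $j\in\{0,\ldots,N\}$ the definition of a morphism in $\covaux$ yields an index $i(j)\in\mN$ with $\ker(\pi_{i(j)})\subseteq f^{-1}(\ker(\pi'_j))$. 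The set $S:=\{i(0),\ldots,i(N)\}$ is finite, so $\mN\setminus S$ is countably infinite and I may choose a bijection $\gamma\colon\{N+1,N+2,\ldots\}\to\mN\setminus S$. Set
\[
  \alpha(j):=\begin{cases} i(j), & j\leq N,\\[2pt] \gamma(j), & j>N.\end{cases}
\]
Then $\alpha$ maps onto $\mN$, its fibres over $\mN\setminus S$ are singletons, and its fibres over $S$ lie inside $\{0,\ldots,N\}$, hence are finite; therefore $\alpha\in\C{M}$. Finally $(f,\alpha)$ is a morphism of $\auxp$: for $j\leq N$ this is the choice of $i(j)$, while for $j>N$ we have $\ker(\pi'_j)=A'$, so $f^{-1}(\ker(\pi'_j))=A\supseteq\ker(\pi_{\alpha(j)})$.

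Putting these together, $\G{W}$ is a functor that is the identity on objects and full and faithful, hence an isomorphism of categories, which is the assertion. The only step with genuine content is the fullness argument, and within it the delicate point is purely combinatorial: arranging $\alpha$ to be a true surjection with all fibres finite and only finitely many fibres of size greater than one. This succeeds precisely because in an object of $\ocovf$ all but finitely many epimorphisms are zero, so the values of $\alpha$ on indices beyond $\{0,\ldots,N\}$ are unconstrained and can be spent on making $\alpha$ surjective and tame.
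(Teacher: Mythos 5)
Your proof is correct and follows essentially the same route as the paper: the paper exhibits two mutually inverse functors $F$ and $G$ that are the identity on objects, with $F([f,\alpha]_\sim)=f$ and $G(f)=[f,\alpha_f]_\sim$ for a suitably chosen tame surjection $\alpha_f$, which is exactly the content of your faithfulness and fullness arguments. If anything, your construction of $\alpha$ on the indices $j>N$ (a bijection onto $\mN\setminus S$) is slightly more careful about guaranteeing genuine surjectivity than the paper's prescription $\alpha(i)=i-N$, whose image need not contain $0$ unless some $j=0$ happens to be chosen for an index $i\leq N$.
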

\begin{proof}
  We implement the isomorphism with two functors
  \begin{equation}
    F:\auxp/\!\!\sim\ \longrightarrow\covaux,\quad G:\covaux\longrightarrow\auxp/\!\!\sim\,,
  \end{equation}
  defined as identities on objects. For any equivalence class
  $[f,\alpha]_\sim$ of morphisms in $\auxp/\!\!\sim$, we define
  $F([f,\alpha]_\sim):=f$. On the other hand, for any morphism
  $f:(A,({\pi}_i)_{i\in\mN})\rightarrow (A',({\pi'}_i)_{i\in\mN})$
in $\covaux$, we set
  $G(f):=[f,\alpha]_\sim$, where $\alpha$ is any
element of $\mathcal{M}$ satisfying:
\begin{equation}\label{alphaf}
\alpha(i)=
\begin{cases}
 i-N  & \text{ for } i> N,\\
 j, \text{ where $j$ is such that }
  \ker\pi_j\subseteq f^{-1}(\ker{\pi'}_i), & \text{ for } i\leq N.
\end{cases}
\end{equation}
Here $N\in\mN$ is a number such that for any
  $i>N$ we have ${\pi'}_i:=A'\rightarrow 0$.
It is obvious that $F\circ G=\id_{\covaux}$ and $G\circ
F=\id_{\auxp/\!\!\sim}$. One can easily see that $F$ and $G$ are
functorial --- it is enough to note that $\check{\alpha}f=f$ as morphisms
of algebras.
\end{proof}
\vspace{1.5mm}

\subsection{The sheaf picture for coverings}
\label{SheavesAndCoverings}~

\noindent
Let $\Sh$ be the category of flabby sheaves of algebras over
$\pinf$.  A morphism $f\colon F\to G$ in $\Sh$ is a collection
$\{f_U\colon F(U)\to G(U)\}_{U\in\Top(\pinf)}$ of morphisms of
algebras (indexed by the open subsets of $\pinf$)
 that fit into the following commutative diagram
\begin{equation}
\xymatrix{
  \ar[d]_{{\rm Res}^U_V(F)} F(U) \ar[r]^{f_U} & G(U) \ar[d]^{{\rm Res}^U_V(G)} \\
   F(V) \ar[r]_{f_V} & G(V)
}
\end{equation}
for any chain $V\subseteq U$ of open subsets of~$\pinf$.

\begin{dfn}\label{FlabbySheavesWithFiniteSupport}
  A  flabby sheaf $F\in Ob(\Sh)$ is said to have
  finite support if there exists $N\in\B{N}$ such that $F(\B{A}_n) =
  0$ for any $n>N$.  The full subcategory of flabby sheaves with
  finite support will be denoted by $\fSh$.
\end{dfn}

Here is an alternative way of seeing sheaves
with finite support on $\pinf$.  Any sheaf of algebras on $\pN$ can be
extended to a sheaf of algebras on $\B{P}^{N+1}(\ztwo)$ by the direct image functor
\begin{equation}
{\rm Sh}(\pN)\ni F\longmapsto(\phi_N)_*(F)\in{\rm Sh}(\B{P}^{N+1}(\ztwo))
\end{equation}
with respect to the canonical
embedding
$\phi_N\colon \pN\to \B{P}^{N+1}(\ztwo)$ defined in Lemma~\ref{Canonical}.  Then we obtain an
injective system of 
categories $({\rm Sh}(\pN),j_N)$ whose colimit can be identified with
 $\fSh$.

For a flabby sheaf $F$ in $Ob(\fSh)$, we will use ${\rm Res}_i(F)$ to
denote the canonical restriction epimorphism $F(\pinf)\to F(\B{A}_i)$
for any $i\in\B{N}$.  Note that, since $F$ is a sheaf with finite
support, all but finitely many morphisms ${\rm Res}_i(F)$ are of the
form $F(\pinf)\to \bs{0}$.
 The following lemma is a reformulation of
\cite[Cor.~4.3]{HKMZ:PiecewisePrincipalComoduleAlgebras} in a new
setting.  (Cf.~\cite[Prop.~1.10]{Yuzvinski:CMRingsOfSections} for a commutative version.)
The proof uses Lemma~\ref{LatticeMorphism} and 
is essentially the same as in
\cite[Prop.~2.2]{HKMZ:PiecewisePrincipalComoduleAlgebras}.
  Note that we can apply the generalized
Chinese
Remainder Theorem (e.g., see \cite[Thm.~18 on p.~280]{SamuelZariski} and
\cite{Maszczyk:Patterns}) as there is always only a finite
number of non-trivial congruences.
\begin{lemma}\label{Equivalence}
  For any $(A,{\ord{\Pi}})\in Ob(\ocovf)$ and $F\in\fSh$, the
  following assignments
  \begin{align}
    \label{ggb}
    \Psi(A,{\ord{\Pi}}) := & \left\{U \mapsto A/R^{\ord{\Pi}}(U)\right\}_{U\in\Top(\pinf)}\in \fSh,\\
    \label{gga}
    \Phi(F) := & \left(F(\pinf);\ {\rm Res}_0(F),\ {\rm Res}_1(F),\
      \ldots,\ {\rm Res}_n(F),\ \ldots\right) \in \ocovf,
  \end{align}
  yield functors establishing an equivalence between
  the category $\ocovf$ of ordered coverings and  the category $\fSh$
  of finitely-supported flabby sheaves of algebras over $\pinf$.
\end{lemma}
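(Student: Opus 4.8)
The plan is to establish the equivalence by exhibiting $\Psi$ and $\Phi$ as mutually quasi-inverse functors, after first checking that each is well defined. Fix $(A,\ord{\Pi})=(A;\pi_0,\pi_1,\dots)$ in $\ocovf$ and write $I_i:=\ker\pi_i$; for a sheaf $F$ let ${\rm Res}^\pinf_U\colon F(\pinf)\to F(U)$ denote the restriction. First I would verify that $\Psi(A,\ord{\Pi})$ lies in $\fSh$. By Lemma~\ref{LatticeMorphism} the assignment $U\mapsto R^{\ord{\Pi}}(U)=\bigcap_{a\in\nu(U)}\sum_{i\in a}I_i$ is a lattice morphism $\Top(\pinf)\to\Lat(A)$ that sends unions to intersections; in particular it is inclusion-reversing, so the quotient maps $A/R^{\ord{\Pi}}(U)\twoheadrightarrow A/R^{\ord{\Pi}}(V)$ for $V\subseteq U$ make $\Psi(A,\ord{\Pi})$ a flabby presheaf, and since $R^{\ord{\Pi}}(\B{A}_n)=I_n$ one gets $\Psi(A,\ord{\Pi})(\B{A}_n)\cong A_n$, which vanishes for all but finitely many $n$. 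The separation axiom for a cover $\C{U}$ of an open set $U$ is immediate from $R^{\ord{\Pi}}(U)=\bigcap_{V\in\C{U}}R^{\ord{\Pi}}(V)$; for gluing, a compatible family $(x_V+R^{\ord{\Pi}}(V))_{V\in\C{U}}$ satisfies $x_V-x_W\in R^{\ord{\Pi}}(V\cap W)=R^{\ord{\Pi}}(V)+R^{\ord{\Pi}}(W)$, and since the lattice $\Lambda$ generated by the $I_i$ is distributive, hence finite, only finitely many distinct ideals $R^{\ord{\Pi}}(V)$ occur and the generalized Chinese Remainder Theorem yields $x\in A$ with $x\equiv x_V$ modulo $R^{\ord{\Pi}}(V)$ for every $V$. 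Functoriality of $\Psi$ is routine: a morphism $f$ in $\ocovf$ has $f(I_i)\subseteq I_i'$, hence $f(R^{\ord{\Pi}}(U))\subseteq R^{\ord{\Pi}'}(U)$, and $f$ descends compatibly to all the quotients.

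Next I would check that $\Phi(F)=(F(\pinf);{\rm Res}_0(F),{\rm Res}_1(F),\dots)$ is an ordered covering: almost all codomains vanish by finite support, $\bigcap_i\ker{\rm Res}_i(F)=\bs{0}$ by the separation axiom for the cover $\{\B{A}_i\}_i$ of $\pinf$, and the lattice generated by $\{\ker{\rm Res}_i(F)\}_i$ is distributive because it is the image of the sublattice of the distributive lattice $\Top(\pinf)$ generated by $\{\B{A}_i\}_i$ under the assignment $\G{X}\colon U\mapsto\ker({\rm Res}^\pinf_U)$. The point to prove is that $\G{X}$ really is a lattice morphism: the identity $\ker({\rm Res}^\pinf_{U\cup V})=\ker({\rm Res}^\pinf_U)\cap\ker({\rm Res}^\pinf_V)$ follows at once from separation for the cover $\{U,V\}$ of $U\cup V$, whereas
\[
  \ker({\rm Res}^\pinf_{U\cap V})=\ker({\rm Res}^\pinf_U)+\ker({\rm Res}^\pinf_V)
\]
uses flabbiness: given $s\in F(\pinf)$ with $s|_{U\cap V}=0$, the sections $0\in F(V)$ and $s|_U\in F(U)$ agree on $U\cap V$, hence glue to some $t\in F(U\cup V)$, which by flabbiness lifts to $s_2\in F(\pinf)$ with $s_2|_V=0$ and $s_2|_U=s|_U$, so that $s=(s-s_2)+s_2$ is the required decomposition. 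Functoriality of $\Phi$ is clear, $\Phi(g):=g_\pinf$ carrying $\ker{\rm Res}_i(F)$ into $\ker{\rm Res}_i(G)$ by naturality of $g$ over $\B{A}_i\hookrightarrow\pinf$.

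Finally I would compute the two composites. For $\Phi\circ\Psi$: the covering condition gives $R^{\ord{\Pi}}(\pinf)=\bigcap_iI_i=\bs{0}$, so $\Psi(A,\ord{\Pi})(\pinf)=A$ and ${\rm Res}_n$ is the projection $A\to A/I_n$, which the isomorphism $A/I_n\cong A_n$ identifies with $\pi_n$; thus $\Phi\Psi\cong\id_{\ocovf}$ naturally. For $\Psi\circ\Phi$: setting $I_i:=\ker{\rm Res}_i(F)$, both $\G{X}$ above and the morphism $R^{(I_i)_i}$ of Lemma~\ref{LatticeMorphism} are lattice morphisms $\Top(\pinf)\to\Lat(F(\pinf))$ agreeing on the subbasis $\{\B{A}_i\}_i$; since every open set of $\pinf$ is a union of the basic sets $\B{A}_a=\bigcap_{i\in a}\B{A}_i$ and both morphisms convert arbitrary unions to intersections, they coincide, so $\ker({\rm Res}^\pinf_U)=R^{(I_i)_i}(U)$ for every open $U$. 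Flabbiness makes ${\rm Res}^\pinf_U$ surjective, whence the induced map $F(\pinf)/R^{(I_i)_i}(U)\xra{\cong}F(U)$ is a natural isomorphism $\Psi\Phi\cong\id_{\fSh}$, completing the proof. I expect the main obstacle to be the pair of local-to-global computations carrying the structural content: the flabby-sheaf identity $\ker({\rm Res}^\pinf_{U\cap V})=\ker({\rm Res}^\pinf_U)+\ker({\rm Res}^\pinf_V)$, where flabbiness is used in an essential way, and its dual --- the generalized Chinese Remainder Theorem input in the gluing axiom for $\Psi$; everything else is bookkeeping.
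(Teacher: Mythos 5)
Your proof is correct and follows essentially the same route the paper indicates: the paper's own ``proof'' merely defers to \cite[Prop.~2.2]{HKMZ:PiecewisePrincipalComoduleAlgebras} together with Lemma~\ref{LatticeMorphism} and the generalized Chinese Remainder Theorem (applicable because only finitely many congruences are non-trivial), and your argument fills in precisely those details --- the lattice morphism $R^{\ord{\Pi}}$, the CRT gluing step, and the flabbiness-based identity $\ker({\rm Res}^\pinf_{U\cap V})=\ker({\rm Res}^\pinf_U)+\ker({\rm Res}^\pinf_V)$.
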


We would like to extend the equivalence we constructed in
Lemma~\ref{Equivalence} to an equivalence of categories between
$\covaux$ (and therefore $\cov$) and a suitable category of sheaves
filling the following diagram:
\begin{equation}\label{diagramcat}
\xymatrix{
  \ar[d]_\Psi^\simeq \ocovf \ar[r]  &
  \covaux \ar@{..>}[d]^\simeq \ar[r]^-{ \G{Z} }_{\simeq} & \cov \\
  \fSh  \ar@{..>}[r] & \pfSh.
}
\end{equation}
As $\covaux$ is isomorphic to a quotient
category, we expect $\pfSh$ to be a quotient of
the following category of sheaves with extended morphisms:
\begin{dfn}
  The objects of $\pfShp$ are
finitely-supported flabby sheaves of algebras over $\pinf$.
  A morphism $[\tilde{f},\alpha^*]:P\rightarrow Q$ in
  $\pfShp$ is a pair consisting of a continuous map
  (see~\eqref{fSigma})
\begin{equation*}
\alpha^*:\pinf\longrightarrow\pinf,\quad \chi_a\longmapsto\chi_{\alpha^{-1}(a)},
\end{equation*}
where $\mathcal{M}\ni\alpha:\mN\rightarrow\mN$
is a tame surjection (Definition~\ref{FiniteFibers}),
and a morphism of sheaves
\begin{equation*}
  \tilde{f}:\alpha^*_*P\rightarrow Q.
\end{equation*}
Composition of morphisms is given by
\begin{equation*}
  [\tilde{g},\beta^*]\circ[\tilde{f},\alpha^*]:=[\tilde{g}\circ (\beta^*_* \tilde{f}),\beta^*\circ\alpha^*].
\end{equation*}
\end{dfn}

To define $\pfSh$ as a quotient category 
equivalent to $\covaux\cong\auxp/\!\!\sim$, 
we proceed by first
proving the equivalence of $\pfShp$ and  $\auxp$.
\begin{lemma}
Let $\Psi:\ocovf\rightarrow\fSh$ and $\Phi:\fSh\rightarrow\ocovf$ be
functors defined in
Lemma~\ref{Equivalence}. Then the functors
\begin{equation*}
\pPsi:\auxp\longrightarrow\pfShp,\quad \pPhi:\pfShp\longrightarrow\auxp,
\end{equation*}
defined on objects by
\begin{equation*}
\pPsi(A,\ord{\Pi})=\Psi(A,\ord{\Pi}),\quad \pPhi(P)=\Phi(P),
\end{equation*}
and on morphisms by
\begin{equation*}
\pPsi(f,\alpha)=[\Psi f,\alpha^*],\quad \pPhi[\tilde{f},\alpha^*]=(\Phi\tilde{f},\alpha),
\end{equation*}
establish an equivalence of categories between $\auxp$ and $\pfShp$.
\end{lemma}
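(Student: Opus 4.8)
The plan is to build on the equivalence $\Psi\dashv\Phi$ from Lemma~\ref{Equivalence} between $\ocovf$ and $\fSh$, and lift it along the two quotient/extension constructions that produce $\auxp$ from $\ocovf$ and $\pfShp$ from $\fSh$. First I would verify that $\pPsi$ and $\pPhi$ are well-defined functors. For $\pPsi$, given a morphism $(f,\alpha)\colon(A,\ord{\Pi})\to(A',\ord{\Pi}')$ in $\auxp$ --- that is, $f\colon\check\alpha(A,\ord{\Pi})\to(A',\ord{\Pi}')$ in $\ocovf$ --- the key point is that $\Psi$ is functorial, so $\Psi f$ is a morphism of sheaves $\Psi(\check\alpha(A,\ord{\Pi}))\to\Psi(A',\ord{\Pi}')$; then I must identify $\Psi(\check\alpha(A,\ord{\Pi}))$ with $\alpha^*_*\big(\Psi(A,\ord{\Pi})\big)$. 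This is the crucial compatibility: unwinding the definition $\Psi(A,\ord{\Pi})(U)=A/R^{\ord{\Pi}}(U)$ with $R^{\ord{\Pi}}(U)=\bigcap_{a\in\nu(U)}\sum_{i\in a}\ker\pi_i$, one checks that replacing $(\pi_i)_i$ by $(\pi_{\alpha(i)})_i$ has the effect of precomposing the open-set assignment with $(\alpha^*)^{-1}$, because $i\in\alpha^{-1}(a)\Leftrightarrow\alpha(i)\in a$, so $\sum_{i\in\alpha^{-1}(b)}\ker\pi_{\alpha(i)}=\sum_{j\in b}\ker\pi_j$ (using that $\alpha$ is surjective) --- and this is precisely the direct-image reindexing $\alpha^*_*$. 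Thus $\pPsi(f,\alpha)=[\Psi f,\alpha^*]$ is a legitimate morphism in $\pfShp$. Dually, for $\pPhi$, a morphism $[\tilde f,\alpha^*]\colon P\to Q$ gives $\tilde f\colon\alpha^*_*P\to Q$ in $\fSh$, hence $\Phi\tilde f\colon\Phi(\alpha^*_*P)\to\Phi(Q)$ in $\ocovf$, and again one identifies $\Phi(\alpha^*_*P)=\check\alpha(\Phi P)$ on the nose by the same index bookkeeping applied to the global-sections-with-restrictions description $\Phi(P)=(P(\pinf);\mathrm{Res}_0(P),\mathrm{Res}_1(P),\ldots)$; reindexing the family $\B{A}_i$ by $\alpha$ permutes/repeats the restriction maps exactly as $\check\alpha$ prescribes.

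Next I would check functoriality, i.e.\ compatibility with composition and identities. Identities are immediate: $\pPsi(\id_A,\id_\mN)=[\Psi\id_A,\id_\mN^*]=[\id_{\Psi(A,\ord{\Pi})},\id_{\pinf}]$. For composition, one expands $\pPsi\big((g,\beta)\circ(f,\alpha)\big)=\pPsi\big(g\circ(\check\beta f),\;\alpha\circ\beta\big)=[\Psi(g\circ\check\beta f),(\alpha\circ\beta)^*]$ and compares with $[\Psi g,\beta^*]\circ[\Psi f,\alpha^*]=[\Psi g\circ(\beta^*_*\Psi f),\beta^*\circ\alpha^*]$. The two agree because $(\alpha\circ\beta)^*=\beta^*\circ\alpha^*$ (pullback is contravariant on $\mN$, matching the earlier remark $(\beta\circ\alpha)^{\textstyle\check{}}=\check\alpha\check\beta$), and because $\Psi(\check\beta f)=\beta^*_*(\Psi f)$ by the identification established in the previous paragraph applied to the morphism $f$; here one also uses that $\Psi$ respects composition in $\ocovf$. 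The same computation, read dually, gives functoriality of $\pPhi$.

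Finally I would establish that $\pPsi$ and $\pPhi$ are mutually quasi-inverse. On objects this is literally Lemma~\ref{Equivalence}: $\pPhi\pPsi(A,\ord{\Pi})=\Phi\Psi(A,\ord{\Pi})\cong(A,\ord{\Pi})$ and $\pPsi\pPhi(P)=\Psi\Phi(P)\cong P$, via the same natural isomorphisms $\eta,\varepsilon$ supplied by that lemma. What remains is to verify that these old natural isomorphisms are still natural with respect to the enlarged morphism classes. Concretely, for a morphism $[\tilde f,\alpha^*]\colon P\to Q$ in $\pfShp$, one must see that the square relating $\eta_P,\eta_Q$ to $\pPsi\pPhi[\tilde f,\alpha^*]=[\Psi\Phi\tilde f,\alpha^*]$ and $[\tilde f,\alpha^*]$ commutes in $\pfShp$ --- meaning commutes after the $\alpha^*$-twist is accounted for. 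Since $\eta$ was natural for ordinary sheaf morphisms, and $\pPsi\pPhi[\tilde f,\alpha^*]$ carries the \emph{same} $\alpha^*$-component as $[\tilde f,\alpha^*]$, the twist is common to both sides and the check reduces to the naturality square for the underlying sheaf morphism $\tilde f\colon\alpha^*_*P\to Q$, which holds by Lemma~\ref{Equivalence}. The analogous argument handles $\varepsilon$ on the $\auxp$ side. Invoking Theorem~\ref{MainLemma} (or just exhibiting the unit and counit directly) then yields the claimed equivalence.

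\textbf{Main obstacle.} The only genuine content, and the step I expect to demand the most care, is the pair of natural identifications $\Psi\circ\check\alpha\cong\alpha^*_*\circ\Psi$ and $\Phi\circ\alpha^*_*\cong\check\alpha\circ\Phi$ --- in particular making sure the reindexing bookkeeping for a \emph{general} tame surjection $\alpha$ (not just a bijection or the single generator $\partial$) is correct, including the behaviour of the ``padding by zero'' codomains, and that these identifications are \emph{natural} in the object so that the functoriality and naturality checks go through uniformly. Everything else is a formal consequence of Lemma~\ref{Equivalence} together with the bookkeeping identities $(\beta\circ\alpha)^{\textstyle\check{}}=\check\alpha\check\beta$ and $(\alpha\circ\beta)^*=\beta^*\circ\alpha^*$.
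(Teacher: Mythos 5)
Your plan is correct and follows essentially the same route as the paper's proof: establish the intertwining identities $\alpha^*_*\circ\Psi=\Psi\circ\check\alpha$ and $\Phi\circ\alpha^*_*=\check\alpha\circ\Phi$ via $(\alpha^*)^{-1}(\B{A}_i)=\B{A}_{\alpha(i)}$, use $(\alpha\circ\beta)^*=\beta^*\circ\alpha^*$ for functoriality, and then lift the unit/counit of Lemma~\ref{Equivalence}. The only step you leave implicit that the paper records explicitly is the identity $\alpha^*_*\eta_P=\eta_{\alpha^*_*P}$, which is exactly what makes your ``the twist is common to both sides'' reduction match the composition rule $[\tilde f,\alpha^*]\circ[\eta_P,\id_\mN^*]=[\tilde f\circ(\alpha^*_*\eta_P),\alpha^*]$; it is worth writing out, but it is immediate from the definition of the direct image on morphisms.
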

\begin{proof}
 We divide the proof
  into several steps.
\begin{enumerate}
\item $(\alpha^*)^{-1}(\B{A}_i)=\B{A}_{\alpha(i)}$ for all $i\in\mN$.
Indeed,
\begin{align*}
(\alpha^*)^{-1}(\B{A}_i)&=(\alpha^*)^{-1}(\{\chi_a\;|\;i\in a\subset\mN\})\nonumber\\
&=\{\chi_b\;|\;\alpha^*(\chi_b)=\chi_a \text{\ and\ }i\in a\subset\mN\}\nonumber\\
&=\{\chi_b\;|\;\chi_{\alpha^{-1}(b)}=\chi_a \text{\ and\ }i\in a\subset\mN\}\nonumber\\
&=\{\chi_b\;|\;i\in\alpha^{-1}(b)\}\nonumber\\
&=\{\chi_b\;|\;\alpha(i)\in b\subset\mN\}\nonumber\\
&=\B{A}_{\alpha(i)}.
\end{align*}
\item As $\alpha$ is tame by assumption, $\alpha^{-1}(a)$ is finite
  for any finite $a\subseteq\mN$. Hence $\alpha^*$ is well defined.
\item Equality $\alpha^*=\beta^*$ implies $\alpha=\beta$ for any
  surjective maps $\alpha,\beta:\mN\rightarrow\mN$.  Hence the functor
  $\pPhi$ is well defined.
\item $\alpha^*_*\Psi=\Psi\check{\alpha}$. Indeed, for any
  $(A,(\pi_i)_i)\in\auxp$, we see that
\begin{align*}
(\alpha^*_*\Psi)((A,(\pi_i)_i))&=\alpha^*_*(U\mapsto A/R^{(\pi_i)_i}(U))\nonumber\\
&=U\mapsto A/R^{(\pi_i)_i}((\alpha^*)^{-1}(U)),\nonumber\\
(\Psi\check{\alpha})((A,(\pi_i)_i))&=\pPsi((A,(\pi_{\alpha(i)})_i))\nonumber\\
&=U\mapsto A/R^{(\pi_{\alpha(i)})_i}(U).
\end{align*}
On the other hand,
the observation that for any open $U\subseteq\pinf$ we have
 $ U=\bigcup_{a \text{\ s.t.}\atop \chi_a\in U}\bigcap_{i\in a}\B{A}_i$
and the result from Step~(1), yield:
\begin{align*}
  R^{(\pi_i)_i}((\alpha^*)^{-1}(U))&=
  R^{(\pi_i)_i}((\alpha^*)^{-1}(\bigcup_{a \text{\ s.t.}\atop \chi_a\in U}\bigcap_{i\in a}\B{A}_i))\nonumber\\
  &=R^{(\pi_i)_i}(\bigcup_{a \text{\ s.t.}\atop \chi_a\in U}\bigcap_{i\in a}(\alpha^*)^{-1}(\B{A}_i))\nonumber\\
  &=R^{(\pi_i)_i}(\bigcup_{a \text{\ s.t.}\atop \chi_a\in U}\bigcap_{i\in a}\B{A}_{\alpha(i)})\nonumber\\
  &=\bigcap_{a \text{\ s.t.}\atop \chi_a\in U}\left(\sum_{i\in a}\ker\pi_{\alpha(i)}\right)\nonumber\\
  &=R^{(\pi_{\alpha(i)})_i}(\bigcup_{a \text{\ s.t.}\atop \chi_a\in U}\bigcap_{i\in a}\B{A}_i)\nonumber\\
  &=R^{(\pi_{\alpha(i)})_i}(U).
\end{align*}
\item Let $\alpha,\beta:\mN\rightarrow\mN$ be maps from
  $\mathcal{M}$. Then
  $(\alpha\circ\beta)^*=\beta^*\circ\alpha^*$. Indeed, for any
  $\chi_a\in\pinf$, we obtain:
\begin{equation*}
  (\beta^*\circ\alpha^*)(\chi_a)=\beta^*(\chi_{\alpha^{-1}(a)})=\chi_{(\beta^{-1}\circ\alpha^{-1})(a)}
  =\chi_{(\alpha\circ\beta)^{-1}(a)}=(\alpha\circ\beta)^*(\chi_a).
\end{equation*}
\item $\pPsi$ is functorial. Indeed, take any composable morphisms
  $(f,\alpha)$ and $(g,\beta)$ in $\auxp$. Then the previous two steps
  and the functoriality of $\Psi$ yield
  \begin{align*}
    \pPsi((g,\beta)\circ(f,\alpha))&=\pPsi((g\circ(\check{\beta} f),\alpha\circ\beta))\nonumber\\
    &=(\Psi(g\circ(\check{\beta} f)),(\alpha\circ\beta)^*)\nonumber\\
    &=(\Psi(g)\circ\Psi(\check{\beta} f)),\beta^*\circ\alpha^*)\nonumber\\
    &=((\Psi g)\circ(\beta^*_*\Psi f),\beta^*\circ\alpha^*)\nonumber\\
    &=[\Psi g,\beta^*]\circ [\Psi f,\alpha^*]\nonumber\\
    &=\pPsi((g,\beta))\circ\pPsi((f,\alpha)).
  \end{align*}
\item $\Phi\alpha^*_*=\check{\alpha}\Phi$.  Indeed,
take any $P\in\pfShp$.
  Using the result of Step~(1), we obtain:
  \begin{align*}
    (\Phi\alpha^*_*)(P)&=\Phi(U\mapsto P(\alpha^{-1}(U)))\nonumber\\
    &=(P(\pinf),(P(\pinf)\mapsto P((\alpha^*)^{-1}(\B{A}_i)))_i)\nonumber\\
    &=(P(\pinf),(P(\pinf)\mapsto P(\B{A}_{\alpha(i)}))_i)\nonumber\\
    &=\check{\alpha}((P(\pinf),(P(\pinf)\mapsto P(\B{A}_{i}))_i))\nonumber\\
    &=(\check{\alpha}\Phi)(P).
  \end{align*}
\item $\pPhi$ is functorial. The proof uses the result from the
  previous step, and is analogous to the proof of  Step~(6).
\item The natural isomorphism $\eta\colon\Psi\Phi\rightarrow\id_{\fSh}$
  comes from a family of isomorphisms of sheaves $\eta_P\colon\Psi\Phi
  P\rightarrow P$. The latter are given by
the canonical isomorphisms between the  image of an epimorphism
 and the quotient of its domain by its kernel
  (c.f.~\cite[Prop.~2.2]{HKMZ:PiecewisePrincipalComoduleAlgebras}):
$\eta_{P,U}:P(\pinf)/\ker(P(\pinf)\rightarrow P(U))
\;\longrightarrow\; P(U)$.
To see that
$\alpha^*_*\eta_P=\eta_{\alpha^*_*P}$
for any sheaf~$P$,
note that $\alpha^*_*\eta_P:\alpha^*_*\Psi\Phi
P=\Psi\Phi\alpha^*_*P\longrightarrow \alpha^*_*P$ and
\begin{align*}
\ \qquad (\alpha^*_*\eta_P)_U
&:=\eta_{P,(\alpha^*)^{-1}(U)}\nonumber\\
  &\phantom{:}=P(\pinf)/\ker(P(\pinf)\rightarrow P((\alpha^*)^{-1}(U)))
\;\longrightarrow\; P((\alpha^*)^{-1}(U))\nonumber\\
  &\phantom{:}=\eta_{\alpha^*_*P,U}.
\end{align*}
Here the first equality is just the definition the of action of the 
direct image
functor on morphisms.
\item The family of maps
$\widetilde{\eta}_P:=[\eta_P,\id_{\mN}^*]:\pPsi\pPhi P\rightarrow P$
establishes a natural isomorphism between $\pPsi\pPhi$ and
$\id_{\pfShp}$. It is clear that $\widetilde{\eta}_P$'s are
isomorphisms.  We know that $\eta$ is a natural isomorphism. In
particular, for any $\alpha\in\mathcal{M}$ and any morphism
$\tilde{f}:\alpha^*_*P\rightarrow Q$ in $\fSh$, the following diagram
is commutative:
\begin{equation*}
\label{etanatnot}
\xymatrix{
\Psi\Phi\alpha^*_*P\ar[rr]^{\eta_{\alpha^*_*P}} \ar[d]_{\Psi\Phi \tilde{f}} && \alpha^*_*P \ar[d]^{\tilde{f}}\\
\Psi\Phi Q \ar[rr]_{\eta_Q} && Q\;.
}
\end{equation*}
On the other hand, we need to establish the commutativity of the
diagrams
\begin{equation*}
\xymatrix{
\pPsi\pPhi P\ar[rr]^{\widetilde{\eta}_{P}} \ar[d]_{\pPsi\pPhi [\tilde{f},\alpha^*]} && P
\ar[d]^{[\tilde{f},\alpha^*]}\\
\pPsi\pPhi Q \ar[rr]_{\widetilde{\eta}_Q} && Q\;.
}
\end{equation*}
Using the commutativity of the first of the preceding two diagrams
and the displayed formula in Step~(9), we obtain the desired:
\begin{align*}
\widetilde{\eta}_Q\circ (\pPsi\pPhi [\tilde{f},\alpha^*])&=
[\eta_Q,\id_\mN^*]\circ[\Psi\Phi \tilde{f},\alpha^*]\nonumber\\
&=[\eta_Q\circ(\Psi\Phi \tilde{f}),\alpha^*]\nonumber\\
&=[\tilde{f}\circ\eta_{\alpha^*_*P},\alpha^*]\nonumber\\
&=[\tilde{f}\circ(\alpha^*_*\eta_{P}),\alpha^*]\nonumber\\
&=[\tilde{f},\alpha^*]\circ[\eta_P,\id_\mN^*]\nonumber\\
&=[\tilde{f},\alpha^*]\circ\widetilde{\eta}_P.
\end{align*}
\item By \cite[Prop.~2.2]{HKMZ:PiecewisePrincipalComoduleAlgebras}),
  we have $\Phi\Psi=\id_{\ocovf}$. Hence, it is easy to see that the
  family of identity morphisms $(\id_{A},\id_\mN)$ in $\auxp$
  establishes a natural isomorphism between $\pPhi\pPsi$ and
  $\id_{\auxp}$.
\end{enumerate}
\end{proof}

Our next step is to define an equivalence relation on $\pfShp$. Let
$[\tilde{f},\alpha^*],[\tilde{g},\beta^*]:P\rightarrow Q$ be morphisms
in $\pfShp$. We say that they are equivalent
($[\tilde{f},\alpha^*]\sim[\tilde{g},\beta^*]$) if
$\tilde{f}_{\pinf}=\tilde{g}_{\pinf}$ as morphisms of algebras (c.f.\
the equivalence
relation on $\auxp$, Lemma~\ref{eqauxp}).   By
\cite[Proposition~II.8.1]{MacLane:Categories}, we know that the
quotient category $\pfShp/\!\!\sim$ exists.  Moreover, it is easy to see
that the relation $\sim$ preserves the compositions of morphisms.  Hence,
by the proof of \cite[Proposition~II.8.1]{MacLane:Categories}, we do
not need to extend the relation $\sim$ to form a quotient category.  Note
that  the equivalence class of the morphism $[\tilde{f},\alpha^*]$ in
$\pfShp$ can be represented by $\tilde{f}_{\pinf}$.  Therefore,
 the quotient
functor $\pfShp\rightarrow\pfShp\!/\!\!\sim$ is defined on morphisms
as
\begin{equation}\label{qfunct}
[\tilde{f},\alpha^*]\longmapsto \tilde{f}_{\pinf}.
\end{equation}
In other words,
\begin{equation}\label{words}
[\tilde{f},\alpha^*]_\sim:=\tilde{f}_{\pinf}.
\end{equation}

The final step to arrive at our classification of finite coverings by
finitely-supported flabby sheaves is as follows:
\begin{lemma}
  The functors $\pPsi:\auxp\rightarrow\pfShp$ and
  $\pPhi:\pfShp\rightarrow\auxp$ send equivalent morphisms to
  equivalent morphisms.  They descend to functors between
  quotient categories
\begin{equation}
\xymatrix{
\pfShp\ar[r]^-{\pPsi}\ar[d] & \auxp \ar[d]\\
\pfShp/\!\!\sim \ar[r]_-{\overline{\Psi}} & \auxp/\!\!\sim,
}\quad
\xymatrix{
\auxp\ar[r]^-{\pPhi}\ar[d] & \pfShp \ar[d]\\
\auxp/\!\!\sim \ar[r]_-{\overline{\Phi}} & \pfShp/\!\!\sim\,,
}
\end{equation}
establishing the equivalence of $\,\pfShp/\!\!\sim\,$ and
$\,\auxp/\!\!\sim$.
\end{lemma}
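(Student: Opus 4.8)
The plan is to proceed in two stages. In the first stage I check that $\pPsi$ and $\pPhi$ each carry $\sim$-equivalent morphisms to $\sim$-equivalent morphisms; by the construction of quotient categories \cite[Prop.~II.8.1]{MacLane:Categories} they then factor uniquely through the two quotient functors and determine $\overline{\Psi}$ and $\overline{\Phi}$ making the displayed squares commute. In the second stage I descend the quasi-inverse data supplied by the previous lemma to conclude that $\overline{\Psi}$ and $\overline{\Phi}$ form an equivalence.

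The crux of the first stage is that both equivalence relations are detected on the section over $\pinf$, and that both $\Psi$ and $\Phi$ act on morphisms through their component over $\pinf$. Indeed, for any $(A,\ord{\Pi})\in Ob(\ocovf)$,
\begin{equation*}
\Psi(A,\ord{\Pi})(\pinf)\;=\;A/R^{\ord{\Pi}}(\pinf)\;=\;A,
\end{equation*}
because $R^{\ord{\Pi}}(\pinf)=\bigcap_{a\in\EFin}\sum_{i\in a}\ker\pi_i\subseteq\bigcap_{i\in\mN}\ker\pi_i=\bs{0}$ (evaluate the intersection on the singletons $a=\{i\}$ and use that $\kappa(\ord{\Pi})$ is a covering); the same holds for $\check{\alpha}(A,\ord{\Pi})=(A,(\pi_{\alpha(i)})_i)$ since $\alpha$ is surjective. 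Equivalently, this is immediate from $\Phi\Psi=\id_{\ocovf}$ (Lemma~\ref{Equivalence}) together with the fact that $\Phi$ reads off the section over $\pinf$. Consequently, for a morphism $(f,\alpha)$ of $\auxp$ the sheaf morphism $\Psi f$ has $(\Psi f)_{\pinf}=f$, so by \eqref{words}
\begin{equation*}
[\Psi f,\alpha^*]_\sim\;=\;(\Psi f)_{\pinf}\;=\;f.
\end{equation*}
Hence $(f,\alpha)\sim(f',\alpha')$, which by definition means $f=f'$, forces $\pPsi(f,\alpha)\sim\pPsi(f',\alpha')$.

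Symmetrically, $\pPhi$ sends a morphism $[\tilde f,\alpha^*]\colon P\to Q$ of $\pfShp$ to $(\Phi\tilde f,\alpha)$, and since $\alpha^*\colon\pinf\to\pinf$ satisfies $(\alpha^*)^{-1}(\pinf)=\pinf$ we have $(\alpha^*_*P)(\pinf)=P(\pinf)$, whence $\Phi\tilde f=\tilde f_{\pinf}\colon P(\pinf)\to Q(\pinf)$. By Lemma~\ref{eqauxp} a morphism of $\auxp/\!\!\sim$ is the class of a morphism of $\auxp$, determined by the underlying algebra map; in particular the class of $(\Phi\tilde f,\alpha)$ is represented by $\Phi\tilde f=\tilde f_{\pinf}$. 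Therefore $[\tilde f,\alpha^*]\sim[\tilde g,\beta^*]$, i.e.\ $\tilde f_{\pinf}=\tilde g_{\pinf}$, forces $\pPhi[\tilde f,\alpha^*]\sim\pPhi[\tilde g,\beta^*]$. This completes the first stage and yields $\overline{\Psi}$ and $\overline{\Phi}$.

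For the second stage, observe that the quotient functors $\auxp\to\auxp/\!\!\sim$ and $\pfShp\to\pfShp/\!\!\sim$ are bijective on objects and full, so any natural transformation between functors out of the quotient category is the image of a natural transformation between their composites with the quotient functor. The previous lemma provides natural isomorphisms $\pPhi\pPsi\to\id_{\auxp}$ and $\widetilde{\eta}\colon\pPsi\pPhi\to\id_{\pfShp}$; whiskering these with the quotient functors and rewriting the components through \eqref{words} — so that, for instance, $\widetilde{\eta}_P=[\eta_P,\id_\mN^*]$ descends to $(\eta_P)_{\pinf}$, its component over $\pinf$ — produces natural isomorphisms $\overline{\Phi}\,\overline{\Psi}\to\id_{\auxp/\!\!\sim}$ and $\overline{\Psi}\,\overline{\Phi}\to\id_{\pfShp/\!\!\sim}$, whose components are again isomorphisms and whose naturality is inherited from the source transformations. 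Hence $\overline{\Psi}$ and $\overline{\Phi}$ are mutually quasi-inverse. The only part that is more than bookkeeping is the pair of global-section identifications in the second and third paragraphs — that $\Psi$ lands among sheaves whose section over $\pinf$ is the covered algebra (which is exactly where the covering condition $\bigcap_i\ker\pi_i=\bs{0}$ enters) and that $\alpha^*_*$ leaves the section over $\pinf$ unchanged — and both are routine, so I do not foresee a genuine obstacle.
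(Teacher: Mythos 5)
Your proof is correct and follows essentially the same route as the paper: the key point in both is the pair of identities $(\Psi f)_{\pinf}=f$ and $\Phi\tilde f=\tilde f_{\pinf}$, which the paper simply states and you justify by computing $R^{\ord{\Pi}}(\pinf)=\bs{0}$ from the covering condition. Your second stage --- descending the quasi-inverse data along the full, object-bijective quotient functors --- is left implicit in the paper's proof but is a correct and welcome completion of the argument.
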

\begin{proof}
  Note that for any morphism $f$ in $\ocovf$ and any morphism
  $\tilde{f}$ in $\fSh$, we have the following equalities of algebra
  maps:
\begin{equation}
(\Psi f)_{\pinf}=f,\quad \Phi\tilde{f}=\tilde{f}_{\pinf}.
\end{equation}
It follows that, if $(f,\alpha)\sim(g,\beta)$ in $\auxp$, then
\begin{equation}
\pPsi(f,\alpha)=[\Psi f,\alpha^*]\sim[\Psi g,\beta^*]=\pPsi(g,\beta)
\end{equation}
in $\pfShp$. Similarly, if $[\tilde{f},\alpha^*]\sim[\tilde{g},\beta^*]$
in $\pfShp$, then
\begin{equation}
\pPhi[\tilde{f},\alpha^*]=(\Phi\tilde{f},\alpha)\sim(\Phi\tilde{g},\beta)
=\pPhi[\tilde{g},\beta^*].
\end{equation}
\end{proof}

Summarizing the foregoing results, we obtain the following
commutative diagram of functors:
\begin{equation}\label{functdiagcube}
\xymatrix{
& \cov \ar[rr] & & \pfShp/\!\!\sim\\
\covaux \ar[ru]^-{\G{Z}} \ar[rr]^>>>>>>>>>>>>>\sim & &
\auxp/\!\!\sim\ar[ru]^-{\overline{\Psi}} &\\
& \fSh \ar[uu] \ar[rr] & & \pfShp.  \ar[uu]\\
\ocovf \ar[uu] \ar[ru]^-\Psi \ar[rr] & & \auxp \ar[uu] \ar[ur]_-{\pPsi} &
}
\end{equation}
Using the above diagram, we immediately conclude the main result
of this article:
\begin{thm}\label{SheafandCoveringEquivalence}
The  assignments given for any 
 $(A,\Pi)\!\in\! Ob(\cov)$, $F\!\in\! Ob(\pfShp/\!\!\sim)$,
$f\in \text{\it Mor}(\cov)$, \ 
$[\tilde{f},\alpha^*]_\sim\in \text{\it Mor}(\pfShp/\!\!\sim)$, by the formulae
  \begin{align*}
    (A,\Pi)& \quad\longmapsto\quad
  \left\{U \mapsto A/R^{{\ord{\Pi}}}(U)\right\}_{U\in\Top(\pinf)}
\in Ob(\pfShp/\!\!\sim),\\
    F& \quad\longmapsto\quad
  \left(F(\pinf),\ \{{\rm Res}_0(F),\ {\rm Res}_1(F),\
      \ldots,\ {\rm Res}_n(F),\ \ldots\}\right) \in Ob(\cov),\\
f& \quad\longmapsto\quad [\Psi(f),\alpha_f^*]_\sim 
\in\text{\it Mor}(\pfShp/\!\!\sim),
\\
[\tilde{f},\alpha^*]_\sim&\quad\longmapsto\quad
\tilde{f}_{\pinf}\in\text{\it Mor}(\cov),
  \end{align*}
 are equivalence
  functors  between the
  category $\cov$ of finite coverings of algebras
and the quotient category
  $\pfShp/\!\!\sim$ of the category of
finitely-supported flabby sheaves of algebras
  over $\pinf$ with extended morphisms.
  Here
  $(A,\ord{\Pi})$ is the image of $(A,\Pi)$ under an equivalence
functor
inverse  to $\G{Z}$, and $\alpha_f$ is a tame surjection defined as
in~\eqref{alphaf}.
\end{thm}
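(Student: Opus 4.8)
The plan is to read the asserted equivalence off the cube of functors in~\eqref{functdiagcube} by composing, along one of its edges, the three equivalences already produced in this section: the equivalence $\G{Z}\colon\covaux\to\cov$ of Lemma~\ref{Replacement}, the isomorphism $\covaux\cong\auxp/\!\!\sim$ of Lemma~\ref{eqauxp} (implemented by the mutually inverse functors $F,G$ there), and the equivalence $\overline{\Psi}\colon\auxp/\!\!\sim\ \to\ \pfShp/\!\!\sim$ with quasi-inverse $\overline{\Phi}$ provided by the last lemma above. Since a composite of equivalences is again an equivalence, it then only remains to evaluate the composite $\cov\to\pfShp/\!\!\sim$ and its reverse on objects and on morphisms and confirm that the outcome matches the four formulas in the statement.

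First I would fix a quasi-inverse $\G{Z}^{-1}\colon\cov\to\covaux$ exactly as in the proof of Lemma~\ref{Replacement}: on an object $(A,\Pi)$ choose an ordering of $\Pi\cup\{A\to\bs{0}\}$ and pad it with zero epimorphisms to get the sequence $(A,\ord{\Pi})$, with the structural isomorphism $\G{Z}(\G{Z}^{-1}(A,\Pi))\cong(A,\Pi)$ implemented by $\id_A$; on morphisms $\G{Z}^{-1}$ is the identity, since $\cov$ and $\covaux$ have literally the same Hom-sets. I would then compose with $G$ (the identity on objects, sending a morphism $f$ to $[f,\alpha_f]_\sim$ with $\alpha_f$ as in~\eqref{alphaf}) and with $\overline{\Psi}$ (the identity on objects, sending $[f,\alpha]_\sim$ to $[\pPsi(f,\alpha)]_\sim=[\Psi f,\alpha^*]_\sim$), and check that the resulting composite sends $(A,\Pi)$ to $\Psi(A,\ord{\Pi})=\{U\mapsto A/R^{\ord{\Pi}}(U)\}_{U\in\Top(\pinf)}$ and a morphism $f$ to $[\Psi f,\alpha_f^*]_\sim$ --- precisely the first and third assignments.

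Running the same edge in the other direction, I would compose $\overline{\Phi}$ --- which sends $[\tilde f,\alpha^*]_\sim$ to $[\pPhi(\tilde f,\alpha^*)]_\sim=[(\Phi\tilde f,\alpha)]_\sim$, and this equals $[(\tilde f_{\pinf},\id_\mN)]_\sim$ because $\Phi\tilde f=\tilde f_{\pinf}$ as algebra maps and $\sim$ forgets the second component --- then $F$ (the identity on objects, $[f,\alpha]_\sim\mapsto f$), then $\G{Z}$ (the identity on morphisms), and check that this composite sends $P$ to $\G{Z}\bigl(\Phi P\bigr)=\bigl(P(\pinf),\{{\rm Res}_i(P)\mid i\in\mN\}\bigr)$ and $[\tilde f,\alpha^*]_\sim$ to $\tilde f_{\pinf}$ --- the second and fourth assignments. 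The natural isomorphisms exhibiting the two composites as quasi-inverse are then obtained by pasting the natural isomorphisms $\G{Z}\circ\G{Z}^{-1}\cong\id$, $\G{Z}^{-1}\circ\G{Z}\cong\id$, $F\circ G=\id$, $G\circ F=\id$, $\overline{\Psi}\circ\overline{\Phi}\cong\id$, $\overline{\Phi}\circ\overline{\Psi}\cong\id$, whose coherence along the intervening squares is exactly what~\eqref{functdiagcube} records.

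I do not expect a genuine obstacle: the mathematical content sits entirely in the earlier lemmas, and this step is bookkeeping. The two points deserving a sentence of care are (i) that the functor $\cov\to\pfShp/\!\!\sim$ is defined only up to natural isomorphism, since it depends on the orderings chosen inside $\G{Z}^{-1}$ --- which is all an equivalence of categories requires, the statement merely pinning down one admissible choice (its last sentence); and (ii) that descending $\pPsi$ and $\pPhi$ to the quotients by $\sim$ is legitimate, which is guaranteed by the last lemma above, where one checks these functors carry $\sim$-equivalent morphisms to $\sim$-equivalent ones.
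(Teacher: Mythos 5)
Your proposal is correct and follows essentially the same route as the paper: the paper's proof of Theorem~\ref{SheafandCoveringEquivalence} consists precisely of assembling the commutative diagram~\eqref{functdiagcube} from Lemma~\ref{Replacement}, Lemma~\ref{eqauxp}, and the two lemmas on $\pPsi$, $\pPhi$ and their descent to the quotients, and then reading off the composite equivalence $\cov\simeq\pfShp/\!\!\sim$ along its edges. Your additional remarks on the choice of ordering inside a quasi-inverse of $\G{Z}$ and on the legitimacy of descending to the quotient categories are exactly the bookkeeping the paper leaves implicit.
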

\noindent
Observe that the equivalence functors of the above theorem are, essentially, 
identity on morphisms. This is because, on both sides of the
equivalence, morphisms considered as input data are only algebra
homomorhisms (see \eqref{words} and Definition~\ref{Def:Coverings}).
 They do, however, satisfy quite different conditions
to be considered  morphisms in an appropriate category. Thus the essence
of the theorem is to re-interpret the natural defining conditions for
an algebra homomorphism to be a morphism of coverings to more
refined conditions that make it a morphism between sheaves. What we gain
this way is a functorial
 description of coverings by the more potent concept
of a sheaf. We know now that lattice operations applied to a covering
will again yield a covering.

We  end this section by stating
Theorem~\ref{SheafandCoveringEquivalence} in the classical setting of
the Gelfand-Neumark equivalence~\cite[Lem.~1]{GelfandNeumark:Hilbert}
between the category of compact Hausdorff spaces and the opposite
category of unital commutative C*-algebras.  Since the intersection of
closed ideals in a C*-algebra equals their product, the lattices of
closed ideals in C*-algebras are always distributive.  Therefore,
remembering that the epimorphisms of commutative unital C*-algebras
can be equivalently presented as the pullbacks of embeddings of
compact Hausdorff spaces, we obtain:
\begin{cor}\label{claco}
  The category of finite closed coverings of compact Hausdorff
spaces (see the beginning of this section)
is equivalent to the
  opposite of the quotient category $\pfShp/\!\!\sim$ of
finitely-supported flabby sheaves of
 commutative unital C*-algebras over $\pinf$ with extended morphisms.
\end{cor}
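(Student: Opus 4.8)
The plan is to derive the corollary from Theorem~\ref{SheafandCoveringEquivalence} by transporting that equivalence along the Gelfand--Neumark duality, and then checking that the equivalence of Theorem~\ref{SheafandCoveringEquivalence} restricts correctly to the commutative C*-subcategories on both sides.

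First I would produce a functor from the category of finite closed coverings of compact Hausdorff spaces into the opposite of $\cov$. To a finite closed covering $\{C_0,\dots,C_N\}$ of a compact Hausdorff space $X$ it assigns the pair $\big(C(X),\{\pi_i\}_i\big)$, where $\pi_i\colon C(X)\to C(C_i)$ is the restriction epimorphism: each $\ker\pi_i$ is a closed ideal, the covering identity $\bigcup_i C_i=X$ is equivalent to $\bigcap_i\ker\pi_i=\bs{0}$, and, as recalled just before the statement, the lattice generated by closed ideals of a C*-algebra is automatically distributive, so this pair is an object of $\cov$. To a morphism $g\colon(X,\C{C})\to(Y,\C{D})$ of coverings --- that is, a continuous map with the property that every $C\in\C{C}$ lies in $g^{-1}(D)$ for some $D\in\C{D}$ --- it assigns the dual algebra map $g^*\colon C(Y)\to C(X)$. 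The point I would check carefully is that the defining condition of a morphism in $\cov$ is precisely the Gelfand dual of the defining condition of a morphism of coverings: by Urysohn's lemma in the compact Hausdorff space $Y$, the inclusion $C\subseteq g^{-1}(D)$ (equivalently $g(C)\subseteq D$, since $g(C)$ is compact hence closed) holds if and only if $\ker\rho_D\subseteq (g^*)^{-1}(\ker\pi_C)$, where $\rho_D\colon C(Y)\to C(D)$ and $\pi_C\colon C(X)\to C(C)$ are restriction maps. Since every unital $*$-homomorphism $C(Y)\to C(X)$ is $g^*$ for a unique continuous $g$, and every epimorphism of commutative unital C*-algebras is, up to isomorphism, a restriction onto a closed subspace, this functor is fully faithful and essentially surjective onto the full subcategory of $\cov$ consisting of those objects whose underlying algebra is a commutative unital C*-algebra. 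Hence the category of finite closed coverings of compact Hausdorff spaces is equivalent to the opposite of that subcategory.

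Next I would verify that the equivalence $\cov\simeq\pfShp/\!\!\sim$ of Theorem~\ref{SheafandCoveringEquivalence} restricts: it should carry the full subcategory of commutative-unital-C*-algebra coverings isomorphically onto the quotient by $\sim$ of the full subcategory of $\pfShp$ whose objects are sheaves of commutative unital C*-algebras. In the direction $\Phi$ this is clear, since for such a sheaf $F$ the algebra $F(\pinf)$ is a commutative unital C*-algebra and the restrictions ${\rm Res}_i(F)$ are $*$-epimorphisms. In the direction $\Psi$ one has to see that $A/R^{\ord{\Pi}}(U)$ is again a commutative unital C*-algebra, which follows because $R^{\ord{\Pi}}(U)=\bigcap_{a\in\nu(U)}\sum_{i\in a}\ker\pi_i$ is, by the finiteness of the support and of each $a\in\EFin$ (cf.\ Lemma~\ref{LatticeMorphism}), built from finitely many sums and intersections of closed ideals, and a finite sum of closed two-sided ideals in a C*-algebra is again closed; the quotient restriction maps are then $*$-epimorphisms, so flabbiness is preserved. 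The enlargement of the morphism class by the continuous maps $\alpha^*$ together with the passage to the quotient by $\sim$ involves neither commutativity nor completeness, hence survives the restriction verbatim. Composing with the first step yields the asserted equivalence. The only genuinely technical points are the equivalence of the two morphism conditions under Gelfand duality (which rests on Urysohn's lemma) and the closedness of $R^{\ord{\Pi}}(U)$ (i.e.\ that a finite sum of closed ideals in a C*-algebra is closed); both are standard, so I anticipate no real obstacle here --- the substance is just the bookkeeping needed to match the definitions on the two sides of Theorem~\ref{SheafandCoveringEquivalence}.
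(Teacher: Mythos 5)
Your proposal is correct and follows essentially the same route as the paper: the paper's proof is precisely the paragraph preceding the corollary, which invokes Gelfand--Neumark duality, the automatic distributivity of lattices of closed ideals (intersection equals product), and the correspondence between epimorphisms of commutative unital C*-algebras and closed embeddings of compact Hausdorff spaces, and then applies Theorem~\ref{SheafandCoveringEquivalence}. You merely spell out the details the paper leaves implicit (the Urysohn-lemma matching of the two morphism conditions and the closedness of $R^{\ord{\Pi}}(U)$), all of which are sound.
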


{\bf Acknowledgements:} This work was partially supported by the
Polish Government grants N201 1770 33 (PMH, BZ), 189/6.PRUE/2007/7
(PMH), and the Argentinian grant PICT 2006-00836 (AK). Part of
this article was finished during a visit of AK at the Max Planck
Institute in Bonn.  The Institute support and hospitality are gratefully
acknowledged.  We are very happy to thank the following people for
discussions and advise:
Paul F.\ Baum, Pierre Cartier, George Janelidze,  Tomasz
Maszczyk, and Jan Rudnik.  Finally, we would like to extend our deepest
gratitude to Chiara Pagani for all her work at the initial 
and final stages of
this paper.

\renewcommand{\baselinestretch}{1.15}

\end{document}